\definecolor{labelkey}{rgb}{0.6,0,1}
\newcounter{corr}
\definecolor{violet}{rgb}{0.580,0.,0.827}
\newcommand{\corr}[3]{\typeout{Warning : a correction remains in page
\thepage}
				\stepcounter{corr}        
				{\color{blue}\ifmmode\text{\,\sout{\ensuremath{#1}}\,}\else\sout{#1}\fi}
        {\color{red}#2}
        {\color{violet} \fbox{\thecorr}#3}}
\newcounter{cst}
\def\ctel#1{C_{\refstepcounter{cst}\@bsphack
\protected@write\@auxout{}%
           {\string\newlabel{#1}{{\thecst}{\thepage}}}\thecst}}
\newcounter{cexp}
\def\terml#1{T_{\refstepcounter{cexp}\@bsphack
\protected@write\@auxout{}%
           {\string\newlabel{#1}{{\thecexp}{\thepage}}}\thecexp}}
\newcommand{\mathbi}[1]{{\boldsymbol #1}}
\newcommand{\eop}{{\unskip\nobreak\hfil\penalty50
           \hskip2em\hbox{}\nobreak\hfil\mbox{\rule{1ex}{1ex} \qquad}
   \parfillskip=0pt
   \finalhyphendemerits=0\par\medskip}}
\renewenvironment{proof}[1][]{\noindent {\bf Proof#1. } }{\eop}
\newtheorem{theorem}{Theorem}[section]
\newtheorem{remark}[theorem]{Remark}
\newtheorem{lemma}[theorem]{Lemma}
\definecolor{shadecolor}{gray}{0.92}
\definecolor{TFFrameColor}{gray}{0.92}
\definecolor{TFTitleColor}{rgb}{0,0,0}
\newcommand{\ba}{\begin{array}{llll}   }
\newcommand{\bac}{\begin{array}{c}}
\newcommand{\bari}{\begin{array}{r}}
\newcommand{\ea}{\end{array}}
\newcommand{\ban}{\begin{array}{llll}}
\newcommand{\ean}{\end{array}}
\newcommand{\be}{\begin{equation}}
\newcommand{\ee}{\end{equation}}
\newcommand{\beqsys }{\beqtab \left \{ \begin{array}{l}}
\newcommand{\eeqsys }{\end{array} \right . \eeqtab }
\newcommand{\benum}{\begin{enumerate}}
\newcommand{\eenum}{\end{enumerate}}
\newcommand{\beqtab}{\begin{eqnarray}} 
\newcommand{\eeqtab}{\end{eqnarray}}
\newcommand{\bfn}{\mathbf{n}}
\newcommand{\centercv}{\x_\cv}                         
\newcommand{\cv}{K}
\newcommand{\dcvedge}{d_{\cv,\edge}}
\newcommand{\disc}{{\mathcal D}}
\newcommand{\edge}{\sigma}
\newcommand{\edges}{{\mathcal E}}              
\newcommand{\edgescv}{{{\edges}_\cv}}
\newcommand{\grad}{\nabla}
\newcommand{\Lam}{\mathbf{\Lambda}}	
\newcommand{\mesh}{{\mathcal M}}
\newcommand{\ncvedge }{\bfn_{\cv,\edge}}  
\newcommand{\norm}[2]{\| #1 \|_{#2}}
\renewcommand{\O}{\Omega}
\newcommand{\R}{\mathbb R}
\newcommand{\V}{\mathbf{V}}
\newcommand{\x}{\mathbi{x}}
\newcommand{\centeredge}{\overline{\mathbi{x}}_\edge} 
\renewcommand{\norm}[2]{\left\Vert#1\right\Vert_{#2}}
\def\ograd{\overline{\grad}}
\def\gradD{\nabla_\disc}
\DeclareDocumentCommand{\RPiD}{ O{\disc} O{,0} }{\Pi_{#1}(X_{#1#2})}
\def\Fdof#1{{\bm{\mathcal{F}}(#1,\R)}}
\def\Fdof{\@ifnextchar[{\@with}{\@without}}
\def\@with[#1]#2{{\bm{\mathcal{F}}(#2;#1)}}
\def\@without#1{{\bm{\mathcal{F}}(#1,\R)}}
\def\RT0{\mathbb{RT}_0}
\def \hfillx {\hspace*{-\textwidth} \hfill}
\definecolor{labelkey}{rgb}{0.6,0,1}
\newcommand*\bigcdot{\mathpalette\bigcdot@{.5}}
\newcommand*\bigcdot@[2]{\mathbin{\vcenter{\hbox{\scalebox{#2}{$\m@th#1\bullet$}}}}}
\def\BState{\State\hskip-\ALG@thistlm}
\begin{document}

	\title{A generalised complete flux scheme for anisotropic advection-diffusion equations}
	
		\author{Hanz Martin Cheng}
\address{Department of Mathematics and Computer Science, Eindhoven University of Technology, P.O. Box 513, 5600 MB Eindhoven, The Netherlands.
	\texttt{h.m.cheng@tue.nl}}
\author{Jan ten Thije Boonkkamp}
\address{Department of Mathematics and Computer Science, Eindhoven University of Technology, P.O. Box 513, 5600 MB Eindhoven, The Netherlands.
	\texttt{j.h.m.tenthijeboonkkamp@tue.nl}}
	
	\date{\today}
	
	%
	%
	%
	\maketitle
		\begin{abstract}
			In this paper, we consider separating the discretisation of the diffusive and advective fluxes in the complete flux scheme. This allows the combination of several discretisation methods for the homogeneous flux with the complete flux (CF) method. In particular, we explore  the combination of the hybrid mimetic mixed (HMM) method and the CF method, in order to utilize the advantages of each of these methods. The usage of HMM allows us to handle anisotropic diffusion tensors on generic polygonal (polytopal) grids, whereas the CF method provides a framework for the construction of a uniformly second-order method, even when the problem is advection dominated. 
		\end{abstract}
	\section{Introduction}
\noindent	Let $\O$ be an open connected subset of $\mathbb{R}^d$ $(d=2,3)$ with boundary $\partial\O = \Gamma_\mathrm{D} \cup \Gamma_\mathrm{N} $, where $\Gamma_\mathrm{D}$ and $\Gamma_\mathrm{N}$ are disjoint. We consider the advection-diffusion problem: Find $c\in H^1(\O)$ such that 
	\begin{equation}\label{eq:adv-diff}
	\begin{aligned}
	\nabla \bigcdot (c\V-\Lam\nabla c) &= s \mbox{ on } \Omega,\\
	c &= g \mbox{ on } \Gamma_\mathrm{D}, \\
	\Lam\nabla c \bigcdot \mathbf{n} &= h \mbox{ on } \Gamma_\mathrm{N}.
	\end{aligned}
	\end{equation}
	 Here,  we assume that $g,h \in L^2(\partial\O)$, the source term $s \in L^2(\O)$, the velocity profile $\V \in C^1(\O)^d$, and the diffusion tensor $\Lam$ is a 
	 symmetric positive definite field in $L^2(\O)^{d\times d}$.
	\begin{remark}[boundary condition]
If $\Gamma_\mathrm{D}\neq \emptyset$, then \eqref{eq:adv-diff} has a unique solution $c$. On the other hand, if $\Gamma_\mathrm{D} = \emptyset$, it can be shown that the kernel is one-dimensional and spanned by a function that has non-zero average \cite{DV09-NeumannBC}. Hence, for uniqueness of $c$, we impose in this case
	\begin{equation}\nonumber
	\frac{1}{|\O|}\int_\O c \,\mathrm{d}A= \bar{c},
	\end{equation}
	where $\bar{c}$ is a specified average value of $c$ in $\O$.
	\end{remark}

Stationary advection-diffusion problems \eqref{eq:adv-diff} are a fundamental component of non-stationary advection-diffusion equations of the form
\[
c_t+\nabla \bigcdot (c\V-\Lam\nabla c) = s \mbox{ on } \Omega \times (0,T),
\]
which are usually encountered in computational fluid dynamics. To be specific, these types of problems are encountered in applications to plasma physics \cite{L08-plasma} and porous media flow (e.g. reservoir engineering \cite{EW01-summary-advection-dominated,PR62} and groundwater flow \cite{ESZ09-ADG}). In particular, for flows in porous media, heterogeneous and highly anisotropic diffusion tensors are involved, due to varying rock properties (such as permeability and porosity) in the domain \cite{LM15-mathematicalModels}.

The uniformly second-order complete flux (CF) scheme \cite{AB11-FVCF} was originally developed on Cartesian grids for \eqref{eq:adv-diff} with scalar diffusion, i.e., $\Lam= D(\x) \mathbf{I}_d$. Some other uniformly second (or higher-order) schemes for the advection--diffusion problem \eqref{eq:adv-diff} with scalar diffusion, applicable on generic meshes, can be found in \cite{BM-2dFV,MR08-FV-conv-dominated}. On the other hand, some recent approaches to deal with anisotropic diffusion tensors while maintaining a high  order of accuracy involve the use of discontinuous skeletal or hybrid high order methods \cite{HHOBook20,DSGD15-adv-diff,DE15-HHO}, multipoint flux approximations (MPFA) \cite{YA08-DG-MPFA}, or the introduction of nonlinear fluxes \cite{L10-monotoneFV}. The aim of this paper is to extend the complete flux  schemes in \cite{AB11-FVCF} in order to handle anisotropic diffusion, whilst maintaining its uniformly second-order convergence properties. Since the complete flux scheme is linear, it is in general cheaper to implement than nonlinear methods. We should note however that compared to nonlinear methods, the complete flux scheme does not guarantee that the maximum principle is satisfied.

The generalised complete flux scheme we propose in this work only requires one unknown inside each cell, and another one on each interface. For advection-dominated problems, hybrid high order methods with polynomials of degree $k$ achieve an order of accuracy between $h^{k+1}$ and $h^{k+1/2}$, depending on the P\'eclet number \cite{DSGD15-adv-diff}. On the other hand, combining the ideas behind the complete flux scheme together with the hybrid high order scheme with polynomials of degree 0, which corresponds to the HMM \cite{dro-10-uni} or the SUSHI \cite{EGH10-SUSHI} scheme is an efficient way to achieve second-order accuracy without introducing additional unknowns.

The novelties of this work are the following:
\begin{itemize}
	\item Splitting the diffusive and advective components of the flux, which allows for a combination of different numerical discretisations with the CF method. In particular, we describe the combination of the hybrid mimetic mixed (HMM) and the CF method.
	\item A generalisation of the CF method on nonuniform meshes in one dimension, which paves a way to formulating the CF method on generic meshes in dimension $d>1$.
	\item The formulation of a generalised grid-based P\'eclet number, which allows us to use the CF method for highly anisotropic problems.
	\item An alternative derivation of the CF method in two-dimensional Cartesian meshes, which can be straightforwardly extended into three dimensions.
	\item The introduction of unknowns along the faces (edges), which allows us to have a more compact stencil for the CF method. In particular, the stencil will only involve the direct neighbors of a cell $K$, i.e., in the case of Figure \ref{fig:stencils}, introduction of edge unknowns will yield a stencil that involves only five cells, whereas without the edge unknowns, the stencil involves nine cells.
\end{itemize}

\begin{figure}[h!]
	\begin{tabular}{cc}
		\includegraphics[width=0.45\linewidth]{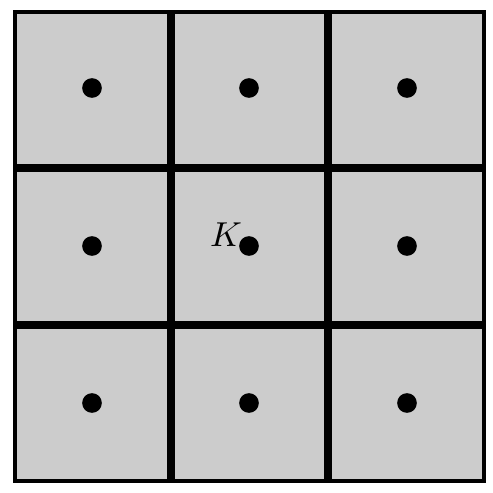} & \includegraphics[width=0.45\linewidth]{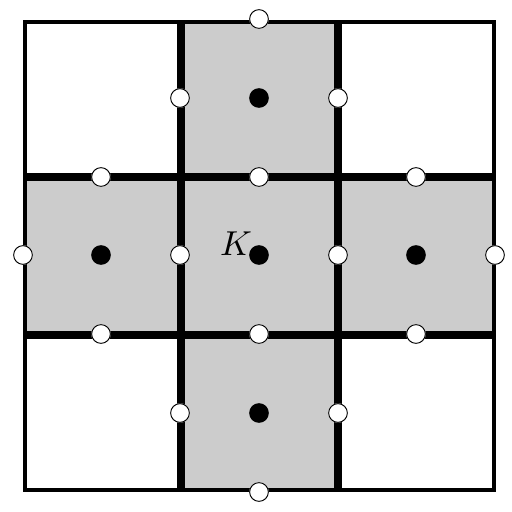} 
	\end{tabular}
	\caption{Stencils for the discretisation: cells involved in the stencil are shaded gray. Left(cell unknowns only); right(including edge unknowns). }
	\label{fig:stencils}
\end{figure}

The outline of the paper is as follows: In Section \ref{sec:HF}, we present a finite volume (homogeneous flux) discretisation for \eqref{eq:adv-diff}, and give details on how we discretise the diffusive and advective fluxes. Following this, we describe in Section \ref{sec:CF} the introduction of inhomogeneous fluxes, which are fundamental in the construction of the complete flux schemes. Numerical tests are then provided in Section \ref{sec:Numtests} to illustrate the second-order accuracy of the combined HMM--CF scheme. Finally, Section \ref{sec:Summary} provides a summary, as well as possible directions for future research.

\section{Finite volume methods for the advection-diffusion problem}\label{sec:HF}
For the discretisation, we start by defining a mesh as in \cite[Section 3.1]{DSGD15-adv-diff}, i.e., a partition of $\O$ into polyhedral (polygonal) cells. We then denote by $\mathcal T=(\mesh, \edges)$  the set of cells $K$ and faces (edges in 2D) $\edge$ of our mesh, respectively. The set of faces is then written as a disjoint union of interior faces $\edges_{\mathrm{int}}$ and exterior (boundary) faces $\edges_{\mathrm{ext}}$, $\mathcal{E} = \edges_{\mathrm{int}} \cup  \edges_{\mathrm{ext}}$, where $\edges_{\mathrm{int}}\cap \partial \O = \emptyset$.
For a cell $K\in\mesh$, we denote by $\edgescv\subset \edges$ the set of faces (edges) of the cell $K$. The numerical solution produced by our discretisation is piecewise constant, and hence, involves one unknown on each cell and  another one on each of the faces (edges), and we denote the space of unknowns by
\[
X_{\disc} := \{w=((w_{\cv})_{\cv\in\mesh},(w_{\edge})_{\edge\in\edgescv}) : w_K \in \mathbb{R}, w_\sigma \in \R  \}.
\] 

To write a finite volume scheme, we start by taking the integral of the first equation in \eqref{eq:adv-diff} over a control volume $K$, and use the divergence theorem to obtain the balance of fluxes:
\[ \sum_{\edge \in \edgescv} \int_\sigma (-\Lam\nabla c + c\V) \bigcdot \mathbf{n}_{K,\sigma} \,\mathrm{d}\ell = \int_{K} s \,\mathrm{d}A,
\]
where $\mathbf{n}_{K,\sigma}$ is the unit outer normal to the face $\sigma\in\edgescv$. Now, if $\sigma$ is a face shared by two distinct cells $K$ and $L$, then 
 \[\int_\sigma (-\Lam\nabla c + c\V) \bigcdot \mathbf{n}_{K,\sigma} \,\mathrm{d}\ell + \int_\sigma (-\Lam\nabla c + c\V) \bigcdot \mathbf{n}_{L,\sigma} \,\mathrm{d}\ell  = 0.\] This is known as the conservation of fluxes.
 
We now denote the approximations to the diffusive and advective fluxes by $F_{K,\sigma}^{D}$ and $F_{K,\sigma}^{A}$, respectively, i.e.
\[F_{K,\sigma}^{D} \approx \int_\sigma (-\Lam\nabla c)\bigcdot \mathbf{n}_{K,\sigma} \,\mathrm{d}\ell , \qquad  F_{K,\sigma}^{A}\approx \int_\sigma (c\V) \bigcdot \mathbf{n}_{K,\sigma} \,\mathrm{d}\ell .
\] Key to the formulation of a finite volume scheme is the definition of discrete fluxes (diffusive and advective)  so that the fluxes satisfy a discrete version of the balance and conservation of fluxes \cite{D14-FVschemes}. That is,
\begin{subequations}\label{eq:disc-fv}
for all $K\in\mesh$,

\begin{equation}\label{eq:fluxbal}
\sum_{\edge \in \edgescv} (F_{K,\sigma}^D + F_{K,\sigma}^A)  = s_K |K|,
\end{equation}
 and for each face $\sigma$ shared by distinct cells $K$ and $L$,
\begin{equation}\label{eq:fluxcons}
(F_{K,\sigma}^D+F_{K,\sigma}^A) + (F_{L,\sigma}^D+F_{L,\sigma}^A) = 0. 
\end{equation}
\end{subequations}
Here, $s_K$ is a piecewise constant approximation of the source term $s$ in cell $K$.
For this paper, we will use the HMM method \cite{dro-10-uni} for the diffusive fluxes, and the Scharfetter--Gummel (SG) method \cite{SG69} for the advective fluxes.
\subsection{HMM method for diffusion}
In this section, we discuss the discretisation of the diffusive fluxes $F_{K,\sigma}^D$ by using the HMM method. The choice of using the HMM method is due to the fact that it can handle anisotropic diffusion tensors. To construct the fluxes, we start by introducing a piecewise constant gradient, which is defined on a sub-triangulation of cells. Let $\x_K\in K$ be a point in cell $K$ such that $K$ is star-shaped with respect to $\x_K$ (a natural choice is taking $\x_K$ to be the cell barycenter), and for all $\sigma\in\edges_{K}$, define $d_{K,\sigma}>0$ to be the orthogonal distance between $\x_K$ and $\sigma$ (see Figure \ref{fig.subd}).  We then set $\centeredge$ to be the centre of mass of $\sigma$. In 2D, this is simply the midpoint of the edge $\sigma$.

\begin{figure}[h]
	\caption{Notations in a generic cell in dimension $d=2$.}
	\begin{center}
		\input{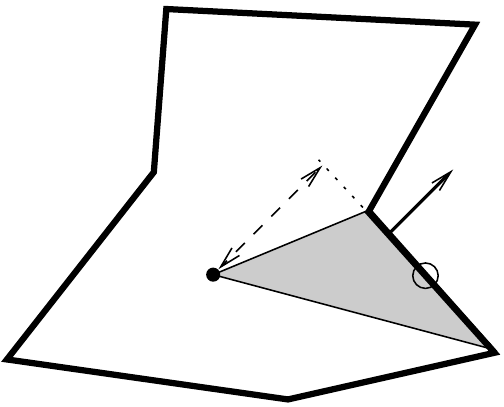_t}
	\end{center}
	\label{fig.subd}
\end{figure} Following \cite{dro-10-uni}, if $K\in\mesh$ and $(D_{K,\edge})_{\edge\in\edgescv}$
is the convex hull of $\edge$ and $\x_K$ (see Figure \ref{fig.subd}), we set
\begin{equation}\label{def.grad} 
\begin{aligned}
&\forall w\in X_{\disc}\,,\;\forall \x\in D_{K,\edge}\,,\\
&\grad_{\disc} w = \ograd_{\cv} w + \dfrac{\sqrt{d}}{\dcvedge}[w_{\edge}-w_{\cv}-\ograd_{\cv}w \bigcdot(\centeredge-\centercv)] \ncvedge\,,\\
&\ograd_{\cv}w=\dfrac{1}{|\cv|}\sum_{\edge\in\edgescv} |\sigma|(w_{\edge}-w_K)\ncvedge.
\end{aligned}
\end{equation}
Here, $\ograd_\cv w$ is a linearly exact reconstruction of the gradient, i.e., if $(w_{\edge})_{\edge\in\edgescv}$ interpolates an affine function $a$ at the edge midpoints, then $\ograd_\cv w=\nabla a$.  The second term in $\grad_{\disc} w(\mathbf{x})$ is a stabilisation term, which ensures the coercivity of the numerical scheme. 

For the HMM method, the definition of the discrete diffusive flux is based on the bilinear form $a(u,w):=\int_\O \Lam \nabla u \bigcdot \nabla w\, \mathrm{d}A$, which stems from the weak formulation of the advection-diffusion equation \eqref{eq:adv-diff}. To be specific, 
the definition of the fluxes $(F_{K,\edge}^D) _{K\in\mesh,\,\edge\in\edgescv}$ is inspired by the relation
\begin{equation}\label{eq:insp_diff_flux}
\int_{K} \Lam \nabla u \bigcdot \nabla w \,\mathrm{d}A= \sum_{\edge \in \edgescv} \int_\sigma (\Lam \nabla u) \bigcdot \mathbf{n}_{K,\sigma} \gamma(w)\,\mathrm{d}\ell  - \int_{K} w \nabla \bigcdot(\Lam \nabla u) \,\mathrm{d}A,
\end{equation}
where $\gamma:H^1(K) \rightarrow L^2(\partial K)$ is the trace operator, with 
\[
\gamma(w) = (w)_{\left|\partial K\right.}  \qquad \forall w\in H^1(K).
\]
Under the assumption that $w$ is constant in cell $K$ with value $w_K$, we can write
\begin{equation}\nonumber
\begin{aligned}
\int_K w\nabla \bigcdot (\Lam \nabla u) \,\mathrm{d}A &= w_K \int_K \nabla \bigcdot (\Lam \nabla u) \,\mathrm{d}A \\ 
&= w_K \sum_{\sigma\in\edgescv} \int_\sigma (\Lam \nabla u) \bigcdot \mathbf{n}_{K,\sigma} \,\mathrm{d}\ell.
\end{aligned}
\end{equation}
 For $c\in X_{\disc}$, the fluxes $(F_{K,\edge}^D) _{K\in\mesh,\,\edge\in\edgescv}$ are then defined by a discrete counterpart of \eqref{eq:insp_diff_flux},
\begin{equation}\label{eq:diff_flux}
\begin{aligned}
&\forall K\in\mesh\,,\;\forall v \in X_{\disc}\,,\\
&\int_{K} \Lam \nabla_{\disc} c(\mathbf{x}) \bigcdot \nabla_{\disc} v(\mathbf{x}) \,\mathrm{d}A =: \sum_{\edge \in \edgescv} F_{K,\sigma}^D (v_{K}-v_{\sigma}).
\end{aligned}
\end{equation}
We note that the fluxes $F_{K,\edge}^D$ in \eqref{eq:diff_flux} are uniquely defined. In particular, we can see from \eqref{def.grad} that $\gradD v$ is uniquely determined by the values of $(v_\sigma-v_K)$. Hence, for a given edge $\sigma\in\edges_{K}$, $F_{K,\sigma}$ can be uniquely determined from \eqref{eq:diff_flux} by setting, for example, $v_{\sigma}-v_K=1$ and $v_{\sigma'}-v_K=0$ for all the other edges $\sigma'\in\edges_{K}$. 

\subsection{SG method for advection}
We now discuss in this section the discretisation of the advective fluxes $F_{K,\sigma}^A$. The original SG method gives a discretisation for both the diffusive and advective fluxes simultaneously. However, this original formulation of the SG method does not work in cases where diffusion is anisotropic. In this section, we use a modification of the SG method, which only gives the discretisation of the advective fluxes, introduced in \cite{VDM11-adv-diff}. Instead of using the Bernoulli function, we use
\[
A_{\mathrm{sg}}(t) = \dfrac{-t}{e^{-t}-1}-1.
\] 
Following the ideas in \cite{AB85-MFEM,VDM11-adv-diff}, we use a hybridised discretisation for the advective flux, using edge unknowns instead of unknowns from neighboring cells. The hybridised SG method for advective fluxes then reads: 
For each $K\in\mesh, \edge\in\edgescv$,
\begin{equation}\label{eq:advFlux}
\begin{aligned}
F_{K,\sigma}^A &= \frac{\lambda_\sigma|\sigma|}{d_{K,\sigma}}\bigg(A_{\mathrm{sg}}\bigg(\frac{d_{K,\sigma} V_{K,\sigma}}{\lambda_\sigma}\bigg) c_K - A_{\mathrm{sg}}\bigg(-\frac{d_{K,\sigma} V_{K,\sigma}}{\lambda_\sigma}\bigg) c_\sigma\bigg), \\
V_{K,\sigma} &= \frac{1}{|\sigma|}\int_\sigma \V \bigcdot \mathbf{n}_{K,\sigma} \,\mathrm{d}\ell.
\end{aligned}
\end{equation}

 The quantity $\lambda_\sigma$ is defined depending on whether $\sigma$ is an interior edge shared by cells $K$ and $L$, or $\sigma$ is a boundary edge. Here, we set 
\[
\lambda_\sigma = \begin{cases}
\min(1,\mbox{spec}(\Lam_K),\mbox{spec}(\Lam_L)) \quad \mbox{ if } \quad  \sigma \in \edgescv \cap \mathcal{E}_L,\\
\min(1,\mbox{spec}(\Lam_K)) \quad \mbox{ otherwise, }
\end{cases}
\]
where $\mbox{spec}(\Lam_K)$ are the eigenvalues of $\Lam_K$. The purpose of scaling $d_{K,\sigma} V_{K,\sigma}$ by the minimum eigenvalue of $\Lam_K$ is to bring enough numerical diffusion to ensure a better
stability for advection dominated problems \cite{VDM11-adv-diff}. In cases with strong anisotropic diffusion, however, this definition of $\lambda_\sigma$ might introduce excessive numerical diffusion. An improvement in the definition of $\lambda_\sigma$, which captures directional diffusion better, will then be introduced in equation \eqref{eq:genLambda} of Section \ref{sec:Pecletno}.

\subsection{Finite volume (homogeneous) fluxes}\label{subsec:HF}
In this section, we define the finite volume (homogeneous) fluxes, given by 
\begin{equation}\label{eq:HF}
F_{K,\sigma}^{H} := F_{K,\sigma}^D + F_{K,\sigma}^A.
\end{equation}

For the discrete flux balance equation \eqref{eq:fluxbal}, we need to compute both $\sum_{\edge \in \edgescv} F_{K,\sigma}^D$ and $\sum_{\edge\in\edgescv} F_{K,\sigma}^A$. The first sum is obtained by
taking $v\in X_{\disc}$ such that $v_K=1$ and 0 elsewhere in \eqref{eq:diff_flux}. The latter sum is obtained by taking the sum over $\edge\in\edgescv$ in \eqref{eq:advFlux}. Combining these, we impose the discrete flux balance equation \eqref{eq:fluxbal} 
\[
\sum_{\edge \in \edgescv} F_{K,\sigma}^H   = s_K|K|.
\]

Now, if $\edge\in\edgescv, K\in\mesh$ is an interior edge, we take $v\in X_{\disc}$ such that $v_\edge=1$, and 0 for all other components in \eqref{eq:diff_flux} in order to obtain $F_{K,\sigma}^D$. 
Combining this with the advective flux $F_{K,\sigma}^A$ defined in \eqref{eq:advFlux} then allows us to construct $F_{K,\sigma}^H$. We then add this to $F_{L,\sigma}^H$ in order to impose the conservativity of fluxes \eqref{eq:fluxcons}; that is,
\[
F_{K,\sigma}^H + F_{L,\sigma}^H = 0.
\] 

Finally, we describe the fluxes along the boundary edges. Given a cell $K\in\mesh$, if $\edge\in\edgescv \cap \Gamma_N$, that is, $\sigma$ is a Neumann boundary edge, then we still take $v\in X_{\disc}$ such that $v_\edge=1$, and 0 for all other components in \eqref{eq:diff_flux} to get $F_{K,\sigma}^D$. We then impose the Neumann boundary conditions by setting
	\[ 
	\frac{1}{|\sigma|} F_{K,\sigma}^D = -h.
	\]
	On the other hand, if $\sigma$ is a Dirichlet boundary edge, we impose
	\[
	c_\sigma = \frac{1}{|\sigma|} \int_\sigma g \,\mathrm{d}\ell.
	\]

A homogeneous flux scheme then constitutes of solving the above system of linear equations. At this stage, we note that other methods may be used for discretising the homogeneous diffusive and advective fluxes.  To maintain the second-order accuracy of the scheme, a natural choice for the discretisation is such that the approximations to both the diffusive and advective fluxes are second order.
\section{The complete flux scheme} \label{sec:CF}
In this section, we discuss the complete flux (CF) scheme. The main idea behind the CF scheme is the introduction of inhomogeneous fluxes $F_{K,\sigma}^I$, which results in an extension of the stencil onto neighboring elements. In particular, the discrete balance of fluxes is now given by 
\begin{equation}\label{eq:complete_fluxbal}
\sum_{\edge\in\edgescv} (F_{K,\sigma}^H + F_{K,\sigma}^I) =  s_K|K| ,
\end{equation}
where $F_{K,\sigma}^I$ are the inhomogeneous fluxes, which, among others, involve source terms coming from the cell $K$ and its neighboring elements. The inhomogeneous fluxes are defined so that for each cell $K\in\mesh$, they are nonzero only on interior edges $\sigma\in\edges_{K} \cap \edges_{\mathrm{int}}$. Inhomogeneous fluxes are not needed for imposing boundary conditions. Further details about the construction of the inhomogeneous fluxes will be discussed in Sections \ref{sec:CFnD} and \ref{sec:IFnd}.

\begin{remark}[Localised stencil]
	With the hybridised discretisation of the advective fluxes in \eqref{eq:advFlux}, the expression \eqref{eq:complete_fluxbal} gives a more localised stencil compared to the original formulation of the complete flux scheme \cite{AB11-FVCF} (see Figure \ref{fig:stencils}, left and right). 
\end{remark}
\subsection{Complete flux scheme in one dimension}\label{sec:CF1d}
To better understand the formulation of the complete flux scheme, we start by recalling its derivation in one dimension, following the ideas in \cite{AB11-FVCF}. In one dimension, \eqref{eq:adv-diff} reads: Find $c\in H^1(\O)$ such that 
\begin{equation}\label{eq:adv-diff-1D}
(-\Lambda c' + cV)' = s, \qquad x_0<x<x_N.
\end{equation}
 We assume that the source term $s\in L^2(\O)$.
Proper boundary conditions (Dirichlet or Neumann) are then imposed at $x_0$ and $x_N$. We then form a partition of the domain $x_0<x_1<\dots<x_{N-1}<x_N$. For $ x\in(x_j,x_{j+1})_{j=0,1,\dots,N-1}$, the idea behind the complete flux scheme is to solve a local boundary value problem
\begin{equation}\nonumber
\begin{aligned}
&f' = s, \qquad x_j<x<x_{j+1},\\
&c(x_j) = c_j, \qquad c(x_{j+1}) = c_{j+1},
\end{aligned}
\end{equation}
where the flux is defined as $f:=-\Lambda c' + cV$. Now, we define
\begin{equation}\nonumber
\hat{P}= \int_{x_{\sigma_j}}^x \frac{V}{\Lambda} \,\mathrm{d}x',  \qquad \hat{s} = \int_{x_{\sigma_j}}^x s \,\mathrm{d}x',
\end{equation}
where $x_{\sigma_j}$ is a point between $x_j$ and $x_{j+1}$ where the flux has to be evaluated. We also introduce a scaled coordinate $\alpha_{\sigma_j} \in (0,1)$ so that
\begin{equation}\label{eq:sigma_lin_comb_1D}
x_{\sigma_j} = (1-\alpha_{\sigma_j}) x_j + \alpha_{\sigma_j} x_{j+1}.
\end{equation}
It can then be shown that an analytical expression for $f$ is given by
\begin{equation}\label{eq:expFlux}
f = -\Lambda\big(ce^{-\hat{P}}\big)' e^{\hat{P}}, \qquad  x\in(x_j,x_{j+1}).
\end{equation}
Integrating \eqref{eq:adv-diff-1D} from $x_{\sigma_j}$ to $x\in (x_j,x_{j+1})$, we obtain
\[
f(x) - f(x_{\sigma_j})= \hat{s}.
\]
Substituting the explicit expression of the flux obtained in \eqref{eq:expFlux} into the equation above then gives
\[
f_{\sigma_j} =  -\Lambda\big(ce^{-\hat{P}}\big)' e^{\hat{P}} -\hat{s},
\]
where $ f_{\sigma_j} = f(x_{\sigma_j})$. Multiplying by $\Lambda^{-1}e^{-\hat{P}}$, we obtain the relation
\begin{equation}\label{eq:f_sig_1D}
(\Lambda^{-1}e^{-\hat{P}})  f_{\sigma_j} = -  \big(ce^{-\hat{P}}\big)' -(\Lambda^{-1}e^{-\hat{P}}) \hat{s}.
\end{equation}

 Integrating \eqref{eq:f_sig_1D} from $x_j$ to $x_{j+1}$ and dividing both sides of the expression by $\int_{x_j}^{x_{j+1}}(\Lambda^{-1}e^{-\hat{P}})\mathrm{d}x$ yields
\begin{equation}\label{eq:completeFlux_1D}
\begin{aligned}
f_{\sigma_j} &= f_{\sigma_j}^H + f_{\sigma_j}^I,\\
f_{\sigma_j}^H &= -\dfrac{e^{-\hat{P}_{j+1}}c_{j+1}-e^{-\hat{P}_j}c_j}{\int_{x_j}^{x_{j+1}}\Lambda^{-1}e^{-\hat{P}} \mathrm{d}x}, \\
f_{\sigma_j}^I &=  -\dfrac{\int_{x_j}^{x_{j+1}}\Lambda^{-1}e^{-\hat{P}}\hat{s}\,\mathrm{d}x}{\int_{x_j}^{x_{j+1}}\Lambda^{-1}e^{-\hat{P}} \mathrm{d}x},
\end{aligned}
\end{equation}
where $\hat{P}_j= \hat{P}(x_j)$.
\begin{remark}[Homogeneous flux]
	We note that taking $x_{\sigma_j} = x_{j+\frac{1}{2}}$, the one-dimensional homogeneous flux $f_{\sigma_j}^H$ in \eqref{eq:completeFlux_1D} corresponds to the original (second-order accurate) Scharfetter--Gummel flux \cite{SG69}, where diffusion and advection are discretised simultaneously. The approximation of the homogeneous flux in \eqref{eq:completeFlux_1D} can be replaced by other discretisations in diffusion and advection, e.g. combining a two-point flux discretisation for diffusion with a centered scheme for advection. A natural choice for maintaining the accuracy of the numerical scheme is to use second-order schemes for discretising both the diffusive and advective fluxes.
\end{remark}

 We now focus on the inhomogeneous flux. For simplicity of exposition, we assume only in this section that the diffusion and velocity field $\Lambda, V\in\mathbb{R}$ are constants. Using a piecewise constant approximation for the source term, an explicit expression for the inhomogeneous flux is then given by 
 \begin{equation}\label{eq:inhomog_flux_1D}
 f_{\sigma_j}^I = \Delta x_j \bigg(Z(-P_{\sigma_j},1-\alpha_{\sigma_j})s_j -Z(P_{\sigma_j},\alpha_{\sigma_j})s_{j+1}\bigg),
 \end{equation}
 where $s_j=s(x_j)$, $\Delta x_j = x_{j+1}-x_{j}$, and
 \begin{equation}\label{eq:func_Z}
 Z(P,\alpha) = \dfrac{e^{\alpha P}-1-\alpha P}{P(e^{P}-1)},
 \end{equation}
 and the local P\'eclet number $P_{\sigma_j}$ is defined to be
 \begin{equation}\label{eq:Peclet-1D}
 P_{\sigma_j} = \frac{V}{\Lambda}\Delta x_j.
 \end{equation}
\begin{remark}[P\'eclet number]
	We note that in one dimension, the definition of the (localised) P\'eclet number \eqref{eq:Peclet-1D} is quite straightforward, due to the fact that 
	\begin{itemize}
				\item No complication arises in taking the quotient in \eqref{eq:Peclet-1D}, since both $V$ and $\Lambda$ are scalars. 
				
			\item The direction of the velocity is only either towards the left or right, and hence the sign of the P\'eclet number is either negative or positive, respectively. This was automatically taken care of in \eqref{eq:inhomog_flux_1D}. 
	\end{itemize}

	In order to extend to dimension $d>1$, a generalised P\'eclet number needs to be introduced, to ensure that the direction of the velocity and the relative local strength of advection over diffusion are captured properly.
\end{remark}

The main difference in this formulation of the one-dimensional complete flux scheme on nonuniform grids compared to \cite{FL17-FVCF-unstructured-grids,ABK17-FVCF-2D} is the introduction of $\alpha_{\sigma_j}$ in \eqref{eq:sigma_lin_comb_1D}, which leads to the function $Z(P,\alpha)$ in \eqref{eq:func_Z}. In the literature, $\alpha_{\sigma_j} = 0.5$; however, in two dimensions or higher, it might occur, especially when the grid is distorted, that $\alpha_{\sigma_j} \neq 0.5$. In this case, the definition of $\alpha_{\sigma_j}$ and $Z(P,\alpha)$ as in \eqref{eq:sigma_lin_comb_1D} and \eqref{eq:func_Z} would be required.

\subsection{Generalised P\'eclet number in higher dimensions} \label{sec:Pecletno}
As seen in \eqref{eq:inhomog_flux_1D}, an expression for the P\'eclet number is needed for computing the inhomogeneous flux in one dimension. This is also the case for dimension $d>1$. Hence, we need to define an extension of the local P\'eclet number \eqref{eq:Peclet-1D}, so that it is applicable in higher dimensions. In this section, we present two ways of defining a local P\'eclet number $P_{K,\sigma}$ along the edge $\sigma$ shared by cells $K$ and $L$. We denote by $\Lam_K$ the average value of $\Lam$ in cell $K$ and by $\V_{\sigma}$ the average value of $\V$ on $\sigma$.

\subsubsection{Eigenvector-based P\'eclet number}
Firstly, we may define the local P\'eclet number as
\begin{equation}\label{eq:Peclet1_2D}
P_{K,\sigma} = |\x_K - \x_L| (\Lam_K^{-1}\V_{\sigma}) \bigcdot \mathbf{n}_{K,\sigma}.
\end{equation}
This is a straightforward modification of \eqref{eq:Peclet-1D}, where the quotient is obtained by taking the matrix inverse of $\Lam_K$, and the direction of the velocity field is taken into account by taking the dot product with the unit outer normal $\mathbf{n}_{K,\sigma}$.

However, this is highly dependent on the eigenvalues of $\Lam_K$. If the eigenvalues of $\Lam_K$ have high contrast, then in general, this would always yield a P\'eclet number $P_{K,\sigma}$ which is very large, regardless of the direction of $\mathbf{n}_{K,\sigma}$. To see this, we write an orthogonal diagonalisation $\Lam_K = \mathbf{U}_K\mathbf{D}_K\mathbf{U}_K^T$, where $\mathbf{D}_K$ is the diagonal matrix containing the eigenvalues of $\Lam_K$ and $\mathbf{U}_K$ is the orthogonal matrix containing the eigenvectors of $\Lam_K$. From this, we have
\begin{equation}\label{eq:Peclet1}
\begin{aligned}
(\Lam_K^{-1}\V_\sigma) \bigcdot \mathbf{n}_{K,\sigma } &= (\mathbf{D}_K^{-1}\mathbf{U}_K^T \V_\sigma) \bigcdot (\mathbf{U}_K^T\mathbf{n}_{K,\sigma}).
\end{aligned}
\end{equation}
Note that the eigenvectors of $\Lam_K$ form an orthonormal basis for $\mathbb{R}^d$. As a consequence, we can write
\begin{equation}\label{eq:onb}
\begin{aligned}
\V_\sigma &= \sum_{i=1}^d \gamma_i\mathbf{u}_{K,i},\\
\mathbf{n}_{K,\sigma} &= \sum_{i=1}^d \beta_i \mathbf{u}_{K,i},
\end{aligned}
\end{equation}
where $\mathbf{u}_{K,i}$ are the column vectors of $\mathbf{U}_K$ with 
\[
\mathbf{u}_{K,i}^T \mathbf{u}_{K,j} = \delta_{ij}.
\]

Denoting by $\lambda_{K,i}$ the eigenvalue of $\Lam_K$ that corresponds to the eigenvector $\mathbf{u}_{K,i}$, we then use \eqref{eq:Peclet1} and \eqref{eq:onb} to write 
\begin{equation} \label{eq:Peclet1_2D_exp} 
(\Lam_K^{-1}\V_\sigma) \bigcdot \mathbf{n}_{K,\sigma } = \sum_{i=1}^d \dfrac{\gamma_i}{\lambda_{K,i}}\beta_i.
\end{equation}
Hence, the term containing the smallest eigenvalue $\lambda_{K,j}$ of $\Lam_K$ will dominate (provided that $\gamma_j\beta_j$ is nonzero), especially if the condition number $\kappa(\Lam_K)$ of $\Lam_K$ is large. Moreover, we see in \eqref{eq:onb} and \eqref{eq:Peclet1_2D_exp} that the strength of advection over diffusion $(\frac{\gamma_i}{\lambda_{K,i}})$ is computed with respect to the basis $\{\mathbf{u}_{K,i}\}_{i=1,\dots,d}$. That is, $(\frac{\gamma_i}{\lambda_{K,i}})$ measures the strength of advection over diffusion in the direction of the eigenvector $\mathbf{u}_{K,i}$. Next, the P\'eclet number along $\mathbf{n}_{K,\sigma}$ is computed by a weighted average, where the weights are determined by writing $\mathbf{n}_{K,\sigma}$ as a linear combination of $\{\mathbf{u}_{K,i}\}_{i=1,\dots,d}$. We note that due to computing the strength of advection over diffusion in the direction of the eigenvector $\mathbf{u}_{K,i}$, a wrong sign may be obtained for the P\'eclet number along $\mathbf{n}_{K,\sigma}$.  

As an example, given a constant velocity field $\V=(1,2)^T$, and a constant diffusion tensor \begin{equation}\label{eq:Lam_ill}\Lam=\dfrac{1}{2} \begin{bmatrix}
1+10^{-8} & 1-10^{-8}  \\
1-10^{-8} & 1+10^{-8}
\end{bmatrix},\end{equation} then we would expect the P\'eclet number along the $x$- and $y$-directions to be moderate throughout the mesh. Considering a cell $K\in\mesh$, we take $\Lam_K=\Lam$ and $\V_\sigma=\V$ for all $\sigma\in\edges_{K}$. Now, we compute the condition number $\kappa(\Lam_K)=10^8$ of $\Lam_K$. This is due to the fact that the eigenvalues of $\Lam_K$, $\lambda_{K,1}=1$ and $\lambda_{K,2}=10^{-8}$, have very large contrast. Also, we observe that the corresponding eigenvectors are given by 
\[
\mathbf{u}_{K,1} = \big(\cos \frac{3\pi}{4}, -\sin \frac{3\pi}{4}\big)^T, \quad
\mathbf{u}_{K,2} = \big(\sin \frac{3\pi}{4}, \cos \frac{3\pi}{4}\big)^T.
\] Since $\frac{1}{\lambda_{2}}=10^8$, we see that the dominant term in the sum \eqref{eq:Peclet1_2D_exp} will come from $\mathbf{u}_{K,2}$. It can be computed that $\V_\sigma \bigcdot \mathbf{u}_{K,2}<0$. The sign of the P\'eclet number is then determined by the sign of $\mathbf{n}_{K,\sigma} \bigcdot \mathbf{u}_{K,2}$. For Cartesian meshes, the outward normal vectors along the east and north edges are given by $\mathbf{e}_x$ and $\mathbf{e}_y$, respectively. This gives $(\Lam_K^{-1}\V_{\sigma}) \bigcdot \mathbf{e}_{x} \approx -5 \times 10^{7}$ and $(\Lam_K^{-1}\V_{\sigma}) \bigcdot \mathbf{e}_{y} \approx 5 \times 10^{7}$. It is notable here that due to computing the relative strength of advection over diffusion along $\mathbf{u}_{K,1}$ and $\mathbf{u}_{K,2}$, the P\'eclet number along the $x$-direction has an incorrect sign. Moreover, \eqref{eq:Peclet1_2D} yields a P\'eclet number with a very large magnitude in both the $x$- and $y$-directions.   

At this stage, we also recall that the scaling factor $\lambda_\sigma$ in \eqref{eq:advFlux} was for the purpose of stabilising the SG method for advection dominated regimes. However, in this case, where diffusion and advection are both moderate, $\lambda_\sigma$ would always be $10^{-8}$. This results in introducing too much numerical diffusion to the scheme.
We will show in the numerical tests in Section \ref{sec:Numtests} that this causes the accuracy of the scheme to degrade to first order. We note however that in some situations, the choice \eqref{eq:Peclet1_2D} for the P\'eclet number and the additional diffusion it introduces enables us to eliminate spurious oscillations, which can also be beneficial.

\subsubsection{Grid-based P\'eclet number}
Another option would involve taking 
\begin{equation}\label{eq:Peclet2_2D}
P_{K,\sigma} = |\x_K-\x_L| \frac{ \V_{\sigma} \bigcdot \mathbf{n}_{K,\sigma}}{\min(\mathbf{n}_{K,\sigma}^T \Lam_K \mathbf{n}_{K,\sigma}, \mathbf{n}_{L,\sigma}^T \Lam_L \mathbf{n}_{L,\sigma})}.
\end{equation}
The rationale behind the usage of $\mathbf{n}_{K,\sigma}^T \Lam_K \mathbf{n}_{K,\sigma}$ is to directly compute a P\'eclet number that is oriented towards $\mathbf{n}_{K,\sigma}$. In particular, we now see that the numerator $\mathbf{\V_\sigma}\bigcdot\mathbf{n}_{K,\sigma}$ is the strength of advection along $\mathbf{n}_{K,\sigma}$, and the denominator $\mathbf{n}_{K,\sigma}^T \Lam_K \mathbf{n}_{K,\sigma}$ is the strength of diffusion along $\mathbf{n}_{K,\sigma}$. Hence, the P\'eclet number along $\mathbf{n}_{K,\sigma}$ is computed directly without having to go through the basis vectors $\{\mathbf{u}_{K,i}\}_{i=1,\dots,d}$. Moreover, upon writing an orthogonal diagonalisation $\Lam_K = \mathbf{U}_K\mathbf{D}_K\mathbf{U}_K^T$, and denoting by  $\mathbf{B}_K$ the $d\times 1$ vector with $i$th entry equal to $\beta_i$ (see \eqref{eq:onb}), we obtain 
\begin{equation}\nonumber
\begin{aligned} 
\mathbf{n}_{K,\sigma}^T \Lam_K \mathbf{n}_{K,\sigma} &=  \mathbf{n}_{K,\sigma}^T \mathbf{U}_K\mathbf{D}_K\mathbf{U}_K^T \mathbf{n}_{K,\sigma} \\
&= \mathbf{B}_K^T \mathbf{D}_K \mathbf{B}_K \quad \mathrm{ from \,\eqref{eq:onb}}\\
&= \sum_{i=1}^d \beta_i^2 \lambda_{K,i}.
\end{aligned}
\end{equation}
Together with expression \eqref{eq:onb}, we find that the P\'eclet number \eqref{eq:Peclet2_2D} can be written as
\[
P_{K,\sigma} = |\x_K-\x_L| \dfrac{\sum_{i=1}^d \gamma_i\beta_i}{\sum_{i=1}^d \beta_i^2 \lambda_{K,i}}.
\]Here, we see that the quantity in the denominator is always positive; hence it does not affect the sign of $\mathbf{\V_\sigma}\bigcdot\mathbf{n}_{K,\sigma}$. Physically, this means that if material is being transported outside (into) cell $K$, which corresponds to $\mathbf{\V_\sigma}\bigcdot\mathbf{n}_{K,\sigma}$ being positive (negative), then the P\'eclet number preserves this property. Moreover, since $\mathbf{n}_{K,\sigma}$ is an outward unit normal vector, we have \[\mathbf{n}_{K,\sigma}^T\mathbf{n}_{K,\sigma}=\sum_{i=1}^d \beta_i^2 = 1.\]
This shows that the diffusion along $\mathbf{n}_{K,\sigma}$ is a weighted average of the eigenvalues of $\Lam_K$. This is a good weighted average in the sense that if $\mathbf{u}_{K,i}$ is almost orthogonal to $\mathbf{n}_{K,\sigma}$, then $\beta_i=\mathbf{n}_{K,\sigma}^T\mathbf{u}_{K,i}\approx 0$, which means that the corresponding eigenvalue $\lambda_{K,i}$ would only have a minimal contribution to the P\'eclet number.

Considering again $\V=(1,2)^T$ and $\Lam$ as in \eqref{eq:Lam_ill}, we have 
\[\frac{ \V_{\sigma} \bigcdot \mathbf{e}_x}{\mathbf{e}_x^T \Lam_K \mathbf{e}_x} \approx 2, \qquad \frac{ \V_{\sigma} \bigcdot \mathbf{e}_y}{\mathbf{e}_y^T \Lam_K \mathbf{e}_y} \approx 4.\] Here, we now see that the P\'eclet numbers both have correct signs and are moderate along both the $x$- and $y$- directions. Moreover, compared to \eqref{eq:Peclet1_2D}, the choice \eqref{eq:Peclet2_2D} captures properly the strength of advection. In particular, for the choice $\V=(1,2)^T$, we see that the P\'eclet number along the $y$-direction is twice as large as that along the $x$-direction. 

Drawing inspiration from the definition \eqref{eq:Peclet2_2D} of the local grid-based P\'eclet number, we redefine the scaling factor $\lambda_\sigma$ in the modified Scharfetter--Gummel flux \eqref{eq:advFlux} to be
\begin{equation}\label{eq:genLambda}
\lambda_\sigma = \begin{cases}
\min(1,\mathbf{n}_{K,\sigma}^T \Lam_K \mathbf{n}_{K,\sigma}, \mathbf{n}_{L,\sigma}^T \Lam_L \mathbf{n}_{L,\sigma})\quad \mbox{ if } \quad  \sigma \in \edgescv \cap \mathcal{E}_L,\\
\min(1,\mathbf{n}_{K,\sigma}^T \Lam_K \mathbf{n}_{K,\sigma}) \quad \mbox{ otherwise. }
\end{cases}
\end{equation} 
These choices mitigate the introduction of excessive numerical diffusion; hence allowing us to preserve the second-order accuracy expected from the complete flux scheme, as will be demonstrated in the numerical tests in Section \ref{sec:Numtests}.

\subsection{Complete flux scheme in higher dimensions} \label{sec:CFnD} 
In this section, we discuss the formulation of the complete flux scheme in higher dimensions, starting with dimension $d=2$. We use the notation $\x = (x,y)$, and for simplicity of exposition, consider Cartesian meshes.  We start by considering an edge $\sigma\in\edgescv$ being shared by cells $K$ and $L$. This is described by $x=x_\sigma$, $y_S<y<y_N$ (Figure \ref{fig.Inhomog_fluxes_cart}, left). We then find two points $\x_K$ and $\x_L$ in $K$ and $L$, respectively, and construct a segment orthogonal to $\sigma$ that passes through these points (in Figure \ref{fig.Inhomog_fluxes_cart}, left, this pertains to the segment that lies on the line $y=y_\sigma$). We then construct rectangular regions $K_\sigma' = (x_K,x_\sigma) \times (y_S,y_N)$  and $L_\sigma' = (x_\sigma, x_L) \times (y_S,y_N)$ associated to cells $K$ and $L$, respectively (Figure \ref{fig.Inhomog_fluxes_cart}, right).
\begin{figure}[h]
	\caption{Regions involved for the inhomogeneous fluxes.}
	\begin{tabular}{cc}
	\includegraphics[width=0.45\linewidth]{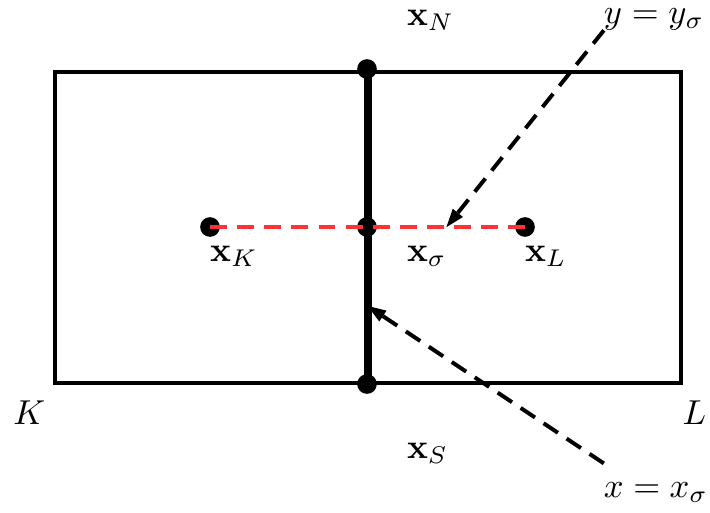} & \includegraphics[width=0.45\linewidth]{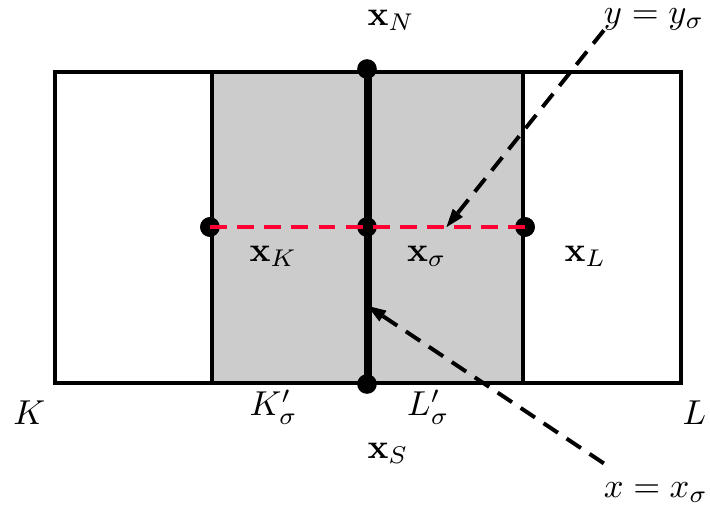} 
\end{tabular}
	\label{fig.Inhomog_fluxes_cart}
\end{figure}

To obtain an approximation of the complete flux $F_{K,\sigma} \approx \int_\sigma (-\Lam \nabla c + c\V) \bigcdot \mathbf{n}_{K,\sigma}\mathrm{d}\ell$ along $\sigma$, we treat the advection-diffusion equation \eqref{eq:adv-diff} as a quasi-one-dimensional boundary value problem. That is, we treat it as an ODE in $x$ by writing  
\begin{equation}\label{eq:quasi1D}
\begin{aligned}
&\frac{\partial}{\partial x} \bigg((-\Lam\nabla c + c\V ) \bigcdot \mathbf{e}_x\bigg) = s - \frac{\partial }{\partial y}\bigg( (-\Lam\nabla c + c\V) \bigcdot \mathbf{e}_y\bigg), \quad x_K < x < x_L, \\
& c(\x_K) = c_K \qquad c(\x_L) = c_L,
\end{aligned}
\end{equation}
where $\mathbf{e}_x = (1,0), \mathbf{e}_y = (0,1)$ are the standard basis vectors in $\R^2$. We note here that for the particular edge under consideration, $\mathbf{n}_{K,\sigma} = \mathbf{e}_x$. To find the value of $(-\Lam\nabla c + c\V ) \bigcdot \mathbf{e}_x$, we then solve the quasi-one-dimensional problem \eqref{eq:quasi1D} in a similar manner as Section \ref{sec:CF1d}.

\begin{remark}If $\sigma$ is a northern or southern edge, a similar approach is used for computing the complete flux, by treating \eqref{eq:adv-diff} as an ODE in $y$.
	\end{remark}

\subsubsection{Inhomogeneous flux: source and cross fluxes}\label{sec:IFnd}
Upon solving the quasi-one-dimensional problem \eqref{eq:quasi1D}, the value of $(-\Lam\nabla c + c\V ) \bigcdot \mathbf{e}_x$ at $x_\sigma$ will of course consist of both a homogeneous and an inhomogeneous component. Since we already have a discretised homogeneous flux \eqref{eq:HF} in Section \ref{sec:HF}, we only focus on the inhomogeneous component. By a generalisation of \eqref{eq:inhomog_flux_1D}, the inhomogeneous component of $(-\Lam\nabla c + c\V ) \bigcdot \mathbf{e}_x$ at $x=x_\sigma$ is given by
\begin{equation}\label{eq:partial_inhomog_flux}
|\x_K-\x_L|
\bigg(Z\big(-P_{K,\sigma},1-\alpha_{K,\sigma} \big)\hat{s}_{K,\sigma} -Z\big(P_{K,\sigma},\alpha_{K,\sigma} \big)\hat{s}_{L,\sigma}\bigg),
\end{equation}
where,  similar to \eqref{eq:sigma_lin_comb_1D}, $\alpha_{K,\sigma} \in (0,1)$ is a scaled coordinate such that
\begin{equation}\label{eq:sigma_lin_comb_2D}\x_\sigma = (1-\alpha_{K,\sigma}) \x_K + \alpha_{K,\sigma} \x_L,\end{equation}
 $\hat{s}_{K,\sigma}$ and $\hat{s}_{L,\sigma}$ are the average values of the right hand side of \eqref{eq:quasi1D} along $x_K<x<x_\sigma$ and $x_\sigma<x<x_L$, respectively, i.e.
\[
\hat{s}_{K,\sigma} = \frac{1}{|\x_K-\x_\sigma|}\int_{x_K}^{x_\sigma} \bigg(s - \frac{\partial }{\partial y}\bigg( \big(-\Lam\nabla c + c\V\big) \bigcdot \mathbf{e}_y\bigg) \bigg)\mathrm{d}x,     \]
\[
\hat{s}_{L,\sigma} = \frac{1}{|\x_L-\x_\sigma|}\int_{x_\sigma}^{x_L} \bigg(s - \frac{\partial }{\partial y}\bigg( \big(-\Lam\nabla c + c\V\big) \bigcdot \mathbf{e}_y\bigg) \bigg)\mathrm{d}x.   
\] 
Substituting the above expressions for $\hat{s}_{K,\sigma}, \hat{s}_{L,\sigma}$ into \eqref{eq:partial_inhomog_flux} and taking the integral over $\sigma$, we obtain an approximation to the inhomogeneous component $F_{K,\sigma}^I$ of the flux $\int_\sigma (-\Lam \nabla c + c\V) \bigcdot \mathbf{n}_{K,\sigma} \, \mathrm{d}\ell$. In particular, the inhomogeneous flux $F_{K,\sigma}^I$ is given by:

\begin{equation} \label{eq:inhomog_fluxes} 
 F_{K,\sigma}^I =  |\x_K-\x_L||y_N-y_S|\bigg(Z(-P_{K,\sigma},1-\alpha_{K,\sigma} )\tilde{s}_{K,\sigma} -Z(P_{K,\sigma},\alpha_{K,\sigma} )\tilde{s}_{L,\sigma}\bigg),
\end{equation}
where
\[
\tilde{s}_{K,\sigma} = \frac{1}{|y_N-y_S|}\int_{y_S}^{y_N} \hat{s}_{K,\sigma}\, \mathrm{d}y,
\]
\[
\tilde{s}_{L,\sigma} = \frac{1}{|y_N-y_S|} \int_{y_S}^{y_N} \hat{s}_{L,\sigma}\, \mathrm{d}y.
\]
\begin{remark}[Extension into 3D] \label{rem:3Dext}
	The formulation of the inhomogeneous flux $F_{K,\sigma}^I$ by
	taking the integral of \eqref{eq:partial_inhomog_flux} over $\sigma$ can straightforwardly be applied to obtain inhomogeneous fluxes in 3D. The only modification to \eqref{eq:inhomog_fluxes} would be the definition of $\tilde{s}_{K,\sigma}$ and $\tilde{s}_{L,\sigma}$, due to an additional term that would come from the partial derivative with respect to $z$ in \eqref{eq:quasi1D}.
\end{remark}

We now decompose the inhomogeneous flux into two components by writing
	\[
	F_{K,\sigma}^{I} = F_{K,\sigma}^{I,s} - F_{K,\sigma}^{I,c},
	\] where 
\begin{equation}\nonumber
\begin{aligned}
F_{K,\sigma}^{I,s} =& \frac{|\x_K-\x_L|}{|\x_K-\x_\sigma|}Z(-P_{K,\sigma},1-\alpha_{K,\sigma} ) \int_{y_S}^{y_N}\int_{x_K}^{x_\sigma} s \,\mathrm{d}x\,\mathrm{d}y
\\
&\!\!- \frac{|\x_K-\x_L|}{|\x_L-\x_\sigma|}Z(P_{K,\sigma},\alpha_{K,\sigma} ) \int_{y_S}^{y_N}\int_{x_\sigma}^{x_L} s \,\mathrm{d}x\,\mathrm{d}y.
\end{aligned}
\end{equation}
Under the assumption that $s$ is piecewise constant with values $s_K$ and $s_L$ on cells $K$ and $L$, respectively, the expression for $F_{K,\sigma}^{I,s}$ simplifies to
\begin{equation}\label{eq:source_flux}
F_{K,\sigma}^{I,s} = \frac{|\x_K-\x_L|}{|\x_K-\x_\sigma|}Z(-P_{K,\sigma},1-\alpha_{K,\sigma} ) |K_\sigma'| s_K -\frac{|\x_K-\x_L|}{|\x_L-\x_\sigma|}Z(P_{K,\sigma},\alpha_{K,\sigma} ) |L_\sigma'|s_L,
\end{equation}
where $K_\sigma'$ and $L_\sigma'$ are rectangular regions contained in $K$ and $L$, respectively (see Figure \ref{fig.Inhomog_fluxes_cart}). Since $F_{K,\sigma}^{I,s}$ involves the source terms, we call it the \emph{source flux}.
On the other hand, we have
\begin{equation}\nonumber
	\begin{aligned}
	F_{K,\sigma}^{I,c} =& \frac{|\x_K-\x_L|}{|\x_K-\x_\sigma|}Z(-P_{K,\sigma},1-\alpha_{K,\sigma} )\int_{y_S}^{y_N}\int_{x_K}^{x_\sigma} \frac{\partial }{\partial y}\bigg( (-\Lam\nabla c + c\V) \bigcdot \mathbf{e}_y\bigg)\mathrm{d}x\,\mathrm{d}y
	\\ &\,-\frac{|\x_K-\x_L|}{|\x_L-\x_\sigma|}Z(P_{K,\sigma},\alpha_{K,\sigma} )\int_{y_S}^{y_N}\int_{x_\sigma}^{x_L} \frac{\partial }{\partial y}\bigg( (-\Lam\nabla c + c\V) \bigcdot \mathbf{e}_y\bigg)\mathrm{d}x\,\mathrm{d}y,
	\end{aligned}
\end{equation}
which, upon changing the order of integration, can be simplified to
\begin{equation}\label{eq:cross_flux}
	\begin{aligned}
	F_{K,\sigma}^{I,c} =& \frac{|\x_K-\x_L|}{|\x_K-\x_\sigma|}Z(-P_{K,\sigma},1-\alpha_{K,\sigma} )\big(F_{K_\sigma',\sigma_N} + F_{K_\sigma',\sigma_S}\big)
	\\ &\,-\frac{|\x_K-\x_L|}{|\x_L-\x_\sigma|}Z(P_{K,\sigma},\alpha_{K,\sigma} )\big(F_{L_\sigma',\sigma_N} + F_{L_\sigma',\sigma_S}\big),
	\end{aligned}
\end{equation}
where  $F_{K_\sigma',\sigma_N}, F_{K_\sigma',\sigma_S}$ are the fluxes along the northern and southern edges, respectively, of  $K_\sigma'$, which need to be determined. Similarly, $F_{L_\sigma',\sigma_N}, F_{L_\sigma',\sigma_S}$ are the fluxes along the northern and southern edges, respectively, of  $L_\sigma'$ (see Figure \ref{fig.Inhomog_fluxes_K}).

\begin{figure}[h]
	\centering
	\caption{Cross-fluxes of $\sigma$.}
		\includegraphics[width=0.6\linewidth]{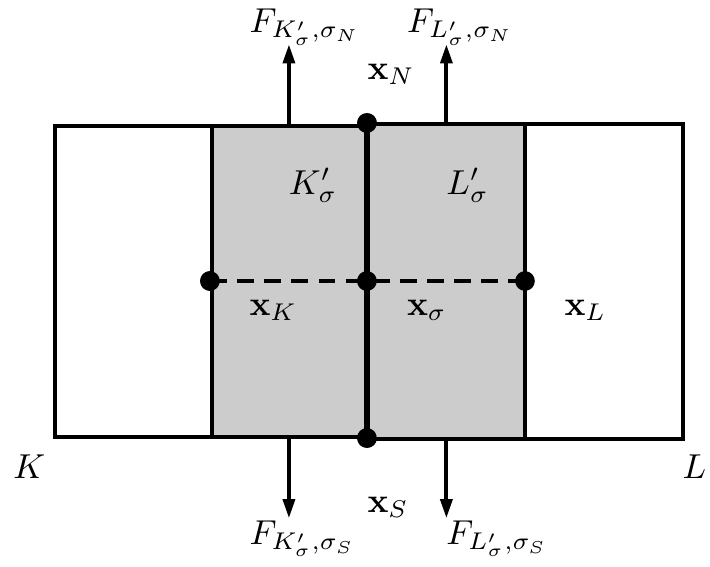} 
	\label{fig.Inhomog_fluxes_K}
\end{figure}
 Since the outward normal vectors at $\sigma_N$ and $\sigma_S$ are both orthogonal to the normal vector at $\sigma$, we call $F_{K,\sigma}^{I,c}$ the \emph{cross-flux}. At this stage, we note that as with the one-dimensional inhomogeneous flux \eqref{eq:inhomog_flux_1D}, the inhomogeneous flux $F_{K,\sigma}^I$ is a higher-order correction term with order $O(|\sigma| |\x_K-\x_L|)$. Hence, a first-order approximation of the cross-flux would be sufficient. This can be done by setting, for Cartesian meshes,
\begin{equation} \label{eq:crossflux_Cart}
F_{K'_\sigma,\sigma_N} = \frac{|\x_K-\x_\sigma|}{|\sigma_N|} F_{K,\sigma_N}^H.
	\end{equation}
	A similar approximation is then used for $F_{K'_\sigma,\sigma_S}$. 
 This completes the definition of the inhomogeneous flux $F_{K,\sigma}^I$ along the interior edge $\sigma\in\edgescv$. A similar process is used to obtain the inhomogeneous fluxes along the other edges of $K$.

To summarise, the inhomogeneous flux $F_{K,\sigma}^I$ along the edge $\sigma$ of cell $K$ is given by 
\begin{equation}\label{eq:inhomo_flux_complete}
\begin{aligned} 
F_{K,\sigma}^I &= F_{K,\sigma}^{I,s} - F_{K,\sigma}^{I,c}\\
&=\dfrac{|\x_K-\x_L|}{|\x_K-\x_\sigma|} \bigg( |K_\sigma'| s_K - F_{K_\sigma',\sigma_N} - F_{K_\sigma',\sigma_S} \bigg) Z(-P_{K,\sigma}, 1-\alpha_{K,\sigma}) \\
& \quad - \dfrac{|\x_K-\x_L|}{|\x_L-\x_\sigma|}\bigg( |L_\sigma'| s_L - F_{L_\sigma',\sigma_N} - F_{L_\sigma',\sigma_S} \bigg) Z(P_{K,\sigma}, \alpha_{K,\sigma}).
\end{aligned}
\end{equation}
On square or rectangular meshes, we can approximate the cross-fluxes in terms of the homogeneous fluxes by using  \eqref{eq:crossflux_Cart} to obtain
\begin{equation}\nonumber
\begin{aligned} 
F_{K,\sigma}^I &= \dfrac{|\x_K-\x_L|}{|\x_K-\x_\sigma|} \bigg( |K_\sigma'| s_K - \frac{|\x_K-\x_\sigma|}{|\sigma_N|}F_{K,\sigma_N}^H - \frac{|\x_K-\x_\sigma|}{|\sigma_S|}F_{K,\sigma_S}^H \bigg) Z(-P_{K,\sigma}, 1-\alpha_{K,\sigma}) \\
& \quad - \dfrac{|\x_K-\x_L|}{|\x_L-\x_\sigma|}\bigg( |L_\sigma'| s_L - \frac{|\x_L-\x_\sigma|}{|\sigma_N|}F_{L,\sigma_N}^H - \frac{|\x_L-\x_\sigma|}{|\sigma_S|}F_{L,\sigma_S}^H \bigg) Z(P_{K,\sigma}, \alpha_{K,\sigma}).
\end{aligned}
\end{equation}

\begin{lemma}[Conservativity of the inhomogeneous fluxes] \label{lem:cons_IF} The inhomogeneous fluxes \eqref{eq:inhomog_fluxes} are conservative. That is, if $\sigma$ is an edge shared by cells $K$ and $L$, then 
	\[
	F_{K,\sigma}^I + F_{L,\sigma}^I = 0.
 	\]
\end{lemma}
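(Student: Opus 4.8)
The plan is to verify conservativity by direct substitution into the closed-form expression \eqref{eq:inhomog_fluxes}, checking that the expression obtained for $F_{L,\sigma}^I$ by interchanging the roles of $K$ and $L$ is the termwise negative of the one for $F_{K,\sigma}^I$. The proof therefore reduces to tracking how each ingredient of \eqref{eq:inhomog_fluxes} transforms under $K\leftrightarrow L$.

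First I record the relevant symmetries. The geometric prefactors $|\x_K-\x_L|$ and the edge measure $|y_N-y_S|$ are invariant under the interchange. Next, $\x_\sigma$ lies on the segment $[\x_K,\x_L]$, so its barycentric coordinate there is unique; comparing $\x_\sigma=(1-\alpha_{K,\sigma})\x_K+\alpha_{K,\sigma}\x_L$ with $\x_\sigma=(1-\alpha_{L,\sigma})\x_L+\alpha_{L,\sigma}\x_K$ gives $\alpha_{L,\sigma}=1-\alpha_{K,\sigma}$, hence also $1-\alpha_{L,\sigma}=\alpha_{K,\sigma}$. Since $\mathbf{n}_{L,\sigma}=-\mathbf{n}_{K,\sigma}$, the numerator $\V_\sigma\bigcdot\mathbf{n}_{K,\sigma}$ of the grid-based P\'eclet number \eqref{eq:Peclet2_2D} changes sign while its denominator $\min(\mathbf{n}_{K,\sigma}^T\Lam_K\mathbf{n}_{K,\sigma},\mathbf{n}_{L,\sigma}^T\Lam_L\mathbf{n}_{L,\sigma})$ is symmetric in the two cells, so $P_{L,\sigma}=-P_{K,\sigma}$. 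Finally, the one-sided averages $\tilde s_{K,\sigma}$ and $\tilde s_{L,\sigma}$ are built from the (unoriented) half-segments $[\x_K,\x_\sigma]$, $[\x_\sigma,\x_L]$ and from the right-hand side of \eqref{eq:quasi1D}, which does not depend on which cell's flux is being computed; under $K\leftrightarrow L$ they simply exchange (the ``near'' average for the $L$-flux is $\tilde s_{L,\sigma}$ and the ``far'' one is $\tilde s_{K,\sigma}$).

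Substituting these facts into the swapped version of \eqref{eq:inhomog_fluxes} and using $Z(-P_{L,\sigma},1-\alpha_{L,\sigma})=Z(P_{K,\sigma},\alpha_{K,\sigma})$ together with $Z(P_{L,\sigma},\alpha_{L,\sigma})=Z(-P_{K,\sigma},1-\alpha_{K,\sigma})$ yields
\[
F_{L,\sigma}^I = |\x_K-\x_L|\,|y_N-y_S|\Big(Z(P_{K,\sigma},\alpha_{K,\sigma})\,\tilde s_{L,\sigma}-Z(-P_{K,\sigma},1-\alpha_{K,\sigma})\,\tilde s_{K,\sigma}\Big),
\]
which is precisely $-F_{K,\sigma}^I$ by \eqref{eq:inhomog_fluxes}; adding gives the claim. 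No identity for $Z$ beyond its definition \eqref{eq:func_Z} is needed, and the source/cross-flux split \eqref{eq:inhomo_flux_complete} need not be invoked (one could alternatively run the same argument on the source and cross pieces separately, the cross contributions being contained in $\tilde s_{K,\sigma},\tilde s_{L,\sigma}$).

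The one point requiring care — and the likely source of error in a hasty write-up — is the bookkeeping behind the antisymmetry $P_{L,\sigma}=-P_{K,\sigma}$ and the $\tilde s$-swap: one must check that the $\partial/\partial y$ cross term inside $\hat s_{K,\sigma}$ is set up with a consistent orientation so that $[\x_K,\x_\sigma]$ and $[\x_\sigma,\x_L]$ genuinely partition $[\x_K,\x_L]$, and that the P\'eclet number is indeed odd under the interchange. The latter is automatic for the grid-based choice \eqref{eq:Peclet2_2D} used by the scheme, thanks to the symmetric $\min$ in its denominator, but it would fail for the eigenvector-based choice \eqref{eq:Peclet1_2D} unless $\Lam$ is continuous across $\sigma$; so the lemma should be read as a statement about \eqref{eq:inhomog_fluxes} used together with \eqref{eq:Peclet2_2D}.
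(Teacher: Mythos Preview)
Your proof is correct and follows the same approach as the paper: note $P_{L,\sigma}=-P_{K,\sigma}$ and $\alpha_{L,\sigma}=1-\alpha_{K,\sigma}$, then substitute directly into \eqref{eq:inhomog_fluxes}. Your write-up is more careful than the paper's in justifying each symmetry, and your closing observation that the antisymmetry $P_{L,\sigma}=-P_{K,\sigma}$ relies on the symmetric denominator of \eqref{eq:Peclet2_2D} (and may fail for \eqref{eq:Peclet1_2D} across a material discontinuity) is a valid refinement the paper does not make explicit.
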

\begin{proof}
	We start by taking note that  $P_{K,\sigma} = -P_{L,\sigma}$ and $\alpha_{L,\sigma} = 1-\alpha_{K,\sigma}$. Substituting these expressions into \eqref{eq:inhomog_fluxes} then gives 
	\begin{equation}\nonumber
	\begin{aligned}
	F_{K,\sigma}^I &= |\x_K-\x_L||y_N-y_S|\bigg(Z(P_{L,\sigma},\alpha_{L,\sigma}) \tilde{s}_{K,\sigma} - Z(-P_{L,\sigma},1-\alpha_{L,\sigma})\tilde{s}_{L,\sigma}\bigg)\\
	&= -F_{L,\sigma}^I.
 	\end{aligned}
	\end{equation}
\end{proof}

	\begin{algorithm}
	\caption{Computation of inhomogeneous fluxes.}\label{algo:inhomogeneous_flux}
	\begin{algorithmic}[1]
		\For {$K\in\mesh$}
		\For {$\sigma\in\edgescv \cap \edges_{\mathrm{int}}$}
		\State Find the corresponding neighbor $L$ that shares $\sigma$ with $K$.
		\State Find points $\x_K, \x_L$ in $K$ and $L$ so that the segment passing through $\x_K, \x_L$ is orthogonal to $\sigma$.
				\State Compute the local grid-based P\'eclet number as in \eqref{eq:Peclet2_2D}.
		\State Construct a rectangular region $K'_\sigma$ with length $|\sigma|$, and width $|\x_K-\x_\sigma|$. 
		\State Similarly, construct a rectangular region $L'_\sigma$.
		\State Obtain an approximation for the value of the source flux $F_{K,\sigma}^{I,s}$ \eqref{eq:source_flux}.
		\State Obtain an approximation for the cross-flux $F_{K,\sigma}^{I,c}$ \eqref{eq:cross_flux}.
		\State Substitute the obtained quantities into \eqref{eq:inhomo_flux_complete} to obtain $F_{K,\sigma}^I$.
		\EndFor
		\EndFor
	\end{algorithmic}
\end{algorithm}

 Algorithm \ref{algo:inhomogeneous_flux} presents a short summary of how to compute the inhomogeneous fluxes.
Although the expression for the inhomogeneous fluxes $F_{K,\sigma}^I$ were derived on two-dimensional Cartesian meshes, Algorithm \ref{algo:inhomogeneous_flux} is also applicable for three-dimensional Cartesian meshes (cf. Remark \ref{rem:3Dext}). 

Finally, we present some details about the implementation of the complete flux scheme. We start by assembling the linear system of equations for the balance and conservation of homogeneous fluxes as in Section \ref{subsec:HF}. Following this, we then add, for each $K\in\mesh,\sigma\in\edgescv$, the inhomogeneous source fluxes $F_{K,\sigma}^{I,s}$ \eqref{eq:source_flux} and cross-fluxes $F_{K,\sigma}^{I,c}$ \eqref{eq:cross_flux} to the flux balance equations in order to obtain \eqref{eq:complete_fluxbal}. This then constitutes the linear system of equations needed for the complete flux scheme. We note here that no modification was made for the equations imposing the conservation of fluxes due to the conservativity of the inhomogeneous fluxes cf. Lemma \ref{lem:cons_IF}.

\section{Numerical tests} \label{sec:Numtests}

In this section, we present some numerical tests to demonstrate the second-order accuracy of the generalised complete flux scheme for the advection-diffusion equation \eqref{eq:adv-diff}. These will be performed on Cartesian meshes with square cells over the domain $\O =(0,1) \times (0,1)$. In particular, for the tests presented below, the homogeneous fluxes will be computed via HMM  \eqref{eq:diff_flux} for diffusion, and SG \eqref{eq:advFlux}  with $\lambda_\sigma$ in \eqref{eq:genLambda}  for advection.  
\subsection{Convergence tests}  \label{sec:simpleTest}
We start with three test cases, where the velocity field $\V$ and the diffusion tensor $\Lam$ are constant over the domain $\O$. For these tests, we solve \eqref{eq:adv-diff} with a given exact solution $c(x,y)=\sin(\pi x) \sin(\pi y)$. Denoting by $\pi_\disc c$ the piecewise constant function reconstructed from the discrete unknowns, we measure the relative errors in the $L^1$-norm 
\[E_1:= \dfrac{\norm{\pi_\disc c - c}{L^1(\O)}}{\norm{c}{L^1(\O)}}.\]
 We note here that the errors are measured in the $L^1$-norm to compare with the tests in \cite{AB11-FVCF}. Similar results are observed upon measuring the errors in the $L^2$-norm. Here, we fix $\V=(1,2)^T$, and take different $\Lam$, corresponding to strong or mild anisotropy, and also to whether the problem is advection dominated or not. Here, the strength of anisotropy is quantified by the condition number $\kappa(\Lam)$ of $\Lam$; we say that anisotropy is strong if $\kappa(\Lam)\geq 10^4$.
\begin{itemize}
	\item Test case 1: constant, homogeneous diffusion, advection dominated. We start with an advection dominated test case, where the diffusion is a scalar, i.e., $\Lam= 10^{-8}\mathbf{I}_d$.

\begin{table}[h!]
	\caption{Relative errors in the solution profile, test case 1.}
	\begin{minipage}{0.4\textwidth}\label{tab:conv1}
		\centering
		\begin{tabular}{|c|c|c|} 
			\hline
			Mesh &    $E_1$ &  Order  \\
			\hline
			$16\times16$     & 2.7601e-02 &  \\
			\hline
			$32\times32$      & 7.2298e-03 & 1.9326\\
			\hline
			$64\times64$     & 1.8437e-03 & 1.9713 \\
			\hline
			$128\times128$     & 4.6542e-04 & 1.9860\\
			\hline
						$256\times256$     & 1.1707e-04 & 1.9911\\
			\hline
		\end{tabular} 
		\subcaption{P\'eclet number \eqref{eq:Peclet1_2D}}
	\end{minipage}
	\hfillx
	\begin{minipage}{0.4\textwidth}
		\centering
		\begin{tabular}{|c|c|c|}
			\hline
			Mesh &    $E_1$ &  Order  \\
\hline
$16\times16$     & 2.7601e-02 &  \\
\hline
$32\times32$      & 7.2298e-03 & 1.9326\\
\hline
$64\times64$     & 1.8437e-03 & 1.9713 \\
\hline
$128\times128$     & 4.6542e-04 & 1.9860\\
\hline
$256\times256$     & 1.1707e-04 & 1.9911\\
\hline
		\end{tabular} 
		\subcaption{P\'eclet number \eqref{eq:Peclet2_2D}}
	\end{minipage}
\end{table}
 
\item Test case 2: strong anisotropy (not aligned with the mesh), moderate advection. Here, advection is said to be moderate in the sense that the grid-based P\'eclet number defined in \eqref{eq:Peclet2_2D} $P_{K,\sigma} <1 $ for all $K\in\mesh,\sigma\in\edges_{K}$. For this test case, we consider the diffusion tensor 

\[\Lam=\dfrac{1}{2} \begin{bmatrix}
1+10^{-8} & 1-10^{-8}  \\
1-10^{-8} & 1+10^{-8}
\end{bmatrix}.\] The purpose of this choice for the diffusion tensor is twofold: firstly, it shows that the grid-based P\'eclet number \eqref{eq:Peclet2_2D} is better than the eigenvector-based P\'eclet number \eqref{eq:Peclet1_2D}. Secondly, this shows that the combination of the HMM and CF yields a second-order scheme for problems with strong anisotropy (even if the anisotropy is not aligned with the mesh). In this case, the condition number of $\Lam$ is $\kappa(\Lam)=10^8$.

   \begin{table}[h!]
	\caption{Relative errors in the solution profile, test case 2.}
	\begin{minipage}{0.4\textwidth}\label{tab:conv2}
		\centering
		\begin{tabular}{|c|c|c|} 
			\hline
			Mesh &    $E_1$ &  Order  \\
			\hline
			$16\times16$     & 2.7494e-02 &  \\
			\hline
			$32\times32$      & 1.4338e-02 & 0.9392\\
			\hline
			$64\times64$     & 7.5341e-03 & 0.9283\\
			\hline
			$128\times128$     & 3.8932e-03 & 0.9524\\
			\hline
						$256\times256$     & 1.9833e-03 & 0.9730\\
			\hline
		\end{tabular} 
		\subcaption{P\'eclet number \eqref{eq:Peclet1_2D}}
	\end{minipage}
	\hfillx
	\begin{minipage}{0.4\textwidth}
		\centering
		\begin{tabular}{|c|c|c|}
			\hline
			Mesh &    $E_1$ &  Order   \\
			\hline
			$16\times16$     & 1.1273e-02 &  \\
			\hline
			$32\times32$      & 2.8457e-03 & 1.9859\\
			\hline
			$64\times64$     & 7.1305e-04 & 1.9967 \\
			\hline
			$128\times128$     & 1.7835e-04 & 1.9992 \\
			\hline
						$256\times256$     & 4.4592e-05 & 1.9998\\
			\hline
		\end{tabular} 
		\subcaption{P\'eclet number \eqref{eq:Peclet2_2D}}
	\end{minipage}
\end{table}
\item Test case 3: strong anisotropy (almost aligned with the mesh), advection-dominated. For our third test, we consider a diffusion tensor which features strong anisotropy that is almost aligned with the mesh, where diffusion is very weak in the $y$-direction, but moderate in the $x$-direction. This is done by taking
\[
\Lam=\begin{bmatrix}
1.5 & 10^{-4}  \\
10^{-4} & 10^{-8}\end{bmatrix}.
\]
Here, the condition number is $\kappa(\Lam)\approx 4.5\times 10^8$, and the eigenvalues are given by $\lambda_{1}\approx 3.33\times 10^{-9}, \lambda_2 \approx 1.5$, with corresponding eigenvectors
\[\mathbf{u}_1 \approx (-6.66\times 10^{-5}, 1)^T, \quad \mathbf{u}_2 \approx (1,-6.66\times 10^{-5})^T. \] 
This aims to show that even for a strongly anisotropic diffusion tensor and a relatively strong advection, by making a proper choice for the P\'eclet number, the combined HMM--CF method still has second-order accuracy.  

\begin{table}[h]
	\caption{Relative errors in the solution profile, test case 3.}
\begin{minipage}{0.4\textwidth}\label{tab:conv3}
	\centering
	\begin{tabular}{|c|c|c|} 
		\hline
		Mesh &    $E_1$ &  Order  \\
		\hline
		$16\times16$     & 3.3812e-02 &  \\
		\hline
		$32\times32$      & 1.4982e-02 & 1.1743\\
		\hline
		$64\times64$     & 6.8786e-03 & 1.1230 \\
		\hline
		$128\times128$     & 3.2523e-03 & 1.0806\\
		\hline
				$256\times256$     & 1.5617e-03 & 1.0583\\
		\hline
	\end{tabular} 
	\subcaption{P\'eclet number \eqref{eq:Peclet1_2D}}
\end{minipage}
\hfillx
\begin{minipage}{0.4\textwidth}
	\centering
	\begin{tabular}{|c|c|c|}
		\hline
		Mesh &    $E_1$ &  Order   \\
		\hline
		$16\times16$     & 8.3214e-03 &  \\
		\hline
		$32\times32$      & 2.4269e-03 & 1.7776\\
		\hline
		$64\times64$     & 6.6236e-04 & 1.8734 \\
		\hline
		$128\times128$     & 1.7369e-04 & 1.9310\\
		\hline
				$256\times256$     & 4.4586e-05 & 1.9618\\
		\hline
	\end{tabular} 
	\subcaption{P\'eclet number \eqref{eq:Peclet2_2D}}
\end{minipage}
\end{table}
\end{itemize}
For the first test case, we note that the P\'eclet numbers \eqref{eq:Peclet1_2D} and \eqref{eq:Peclet2_2D} are identical. In particular, since the diffusion is a scalar, the generalised complete flux scheme returns to the formulation presented in \cite{AB11-FVCF}, which we expect to achieve second-order accuracy. This is confirmed by the numerical tests presented in Table \ref{tab:conv1}.  
Now, we see in Tables \ref{tab:conv2} and \ref{tab:conv3} that the grid-based P\'eclet number \eqref{eq:Peclet2_2D} maintains the second-order accuracy of the complete flux scheme. On the other hand, using the eigenvector-based P\'eclet number \eqref{eq:Peclet1_2D} reduces the complete flux scheme into a first-order scheme. This is due to the fact that \eqref{eq:Peclet1_2D} causes the numerical scheme to decide whether the problem is advection dominated based on the eigenvalues of $\Lam$ (i.e., the P\'eclet number is always large when the eigenvalues of $\Lam$ have high contrast), which is due to computing the strength of advection over diffusion along the directions of the eigenvectors of $\Lam$, as discussed in Section \ref{sec:CFnD}. This phenomenon is also illustrated in the third test case, for which diffusion is only weak in the $y$-direction. By using \eqref{eq:Peclet1_2D}, the P\'eclet number has a magnitude of $3.99\times 10^4$ in the $x$-direction and a magnitude of $5.99\times 10^8$ in the $y$-direction; hence, diffusion is interpreted to be weak in all directions. On the other hand, by using \eqref{eq:Peclet2_2D}, the P\'eclet number has a magnitude of $6.66 \times 10^{-1}$ and $2\times 10^8$ in the $x$- and $y$-directions, respectively, so diffusion is weak only in the $y$-direction. 

Having illustrated the second-order convergence of the scheme, we now proceed to test the limits of the scheme by performing two extreme tests. In these cases, an analytical solution is not available, and hence, we analyse the qualitative aspects of the numerical solutions.
\subsection{Strong anisotropy, heterogeneous, and advection-dominated} \label{sec:hetero_test}
We present a numerical test which involves a strongly heterogeneous and anisotropic diffusion tensor, as described in \cite{VDM11-adv-diff,ESZ09-ADG}. This will be referred to as test case 4. For this test, an exact analytic solution is not available, so we comment on the qualitative properties of the numerical solution. Here, homogeneous Dirichlet boundary conditions are imposed. The diffusion tensor is piecewise constant, defined in the following subdomains: $\O_1=(0,2/3) \times (0,2/3), \O_2 = (2/3,1)\times (0,2/3), \O_3 = (2/3,1) \times (2/3,1), \O_4 = (0,2/3)\times(2/3,1)$, with
\[
\Lam = 
\begin{bmatrix}
10^{-6} & 0 \\
0 & 1
\end{bmatrix} \qquad \mbox{ in } \O_1 \mbox{ and } \O_3,
\]
and
\[\Lam = 
\begin{bmatrix}
1 & 0 \\
0 & 10^{-6}
\end{bmatrix} \qquad \mbox{ in } \O_2 \mbox{ and } \O_4.
\]
The velocity field considered is $\V=(40x(2y-1)(x-1),-40y(2x-1)(y-1))^T$, which simulates a counterclockwise rotation (see Figure \ref{fig.param}). 
\begin{figure}
	\caption{Data for test case 4 (left: velocity field; right: diffusion tensor).}
	\begin{tabular}{cc}
		\includegraphics[width=0.5\linewidth]{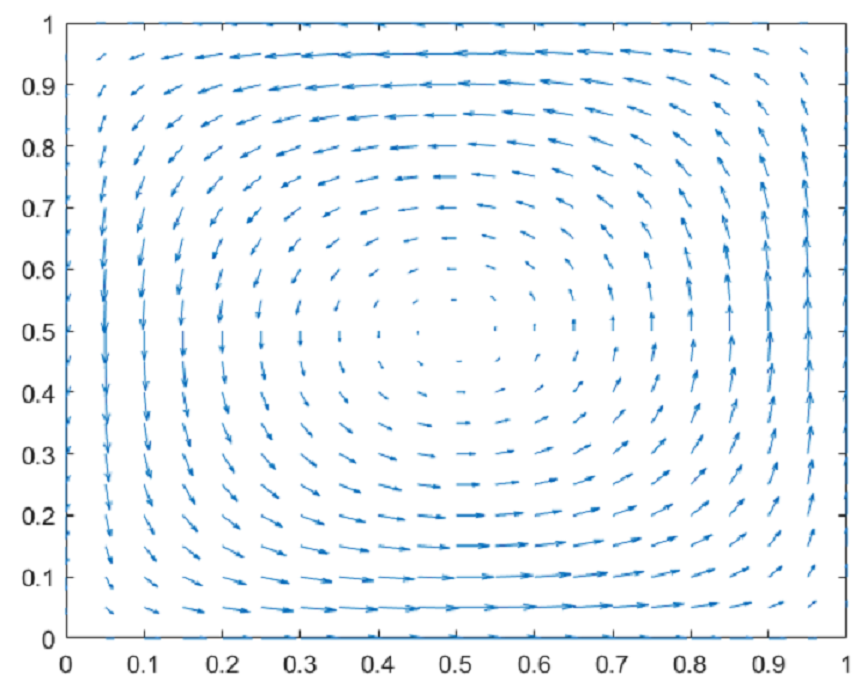} &\qquad \includegraphics[width=0.4\linewidth]{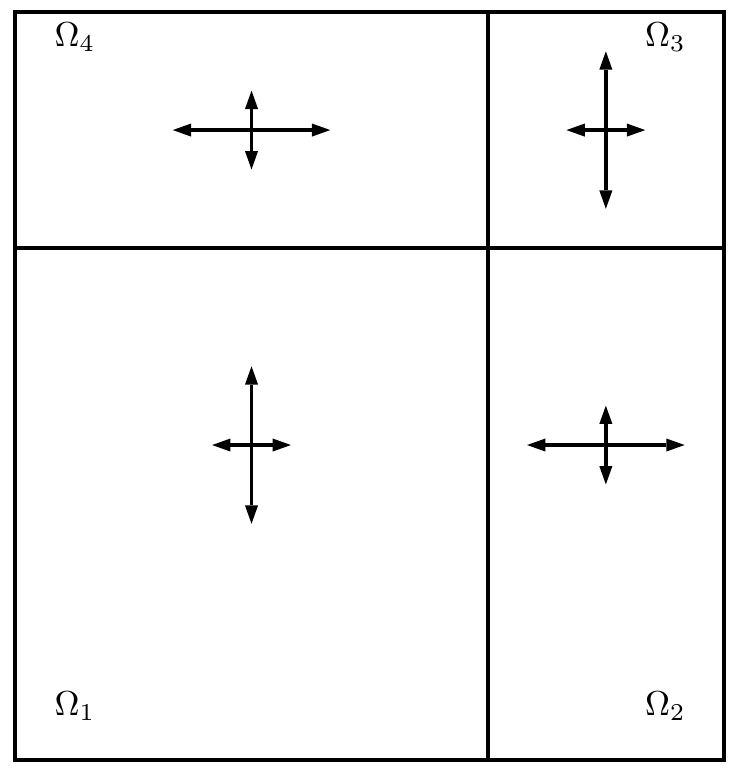} 
	\end{tabular}
	\label{fig.param}
\end{figure}
The source term is a ring positioned at a distance of 0.35 from the center of the domain, i.e., $s(x,y)= 10^{-2} \exp(-(r-0.35)^2/0.005)$, where $r^2 = (x-0.5)^2+(y-0.5)^2$ (see Figure \ref{fig.source}). 

\begin{figure}
	\caption{Source term for test case 4.}
	\begin{center}
		\includegraphics[width=0.5\linewidth]{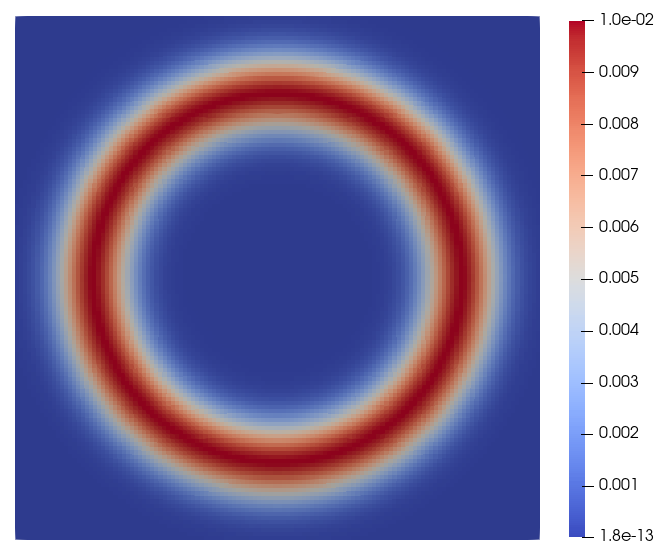} 
	\end{center}
	\label{fig.source}
\end{figure}

We now observe that the source is evenly distributed, and the velocity field is oriented along the direction of increasing diffusivity. Consider now the interface shared by $\O_4$ and $\O_1$. The velocity field causes the solution profile to carry the source in $\O_4$ towards the interface it shares with $\O_1$. Due to the low diffusivity along the $y$-direction in $\O_4$, we expect that the value of the solution to be small near the interface. On the other hand, due to the strong diffusivity along the $y$-direction in $\O_1$, we expect the solution to be large near the interface it shares with $\O_4$. Hence, we expect the exact solution to form internal layers near the interfaces that separate the subdomains. This can be observed in the solution profiles presented in Figure \ref{fig.hetero_test_case}. A better visualisation is given by 3D plots in Figure \ref{fig.hetero_test_case_3D}. Moreover, we observe that the numerical solution does not yield non-physical negative values (minimum value is non-negative). We also note that in the eyeball norm, the numerical solution at the coarse mesh with $60\times 60$ cells is already very close to what we obtain on the very fine mesh of $480\times 480$ cells, which is expected from the complete flux scheme. Also, the maximum value of $7.3\times 10^{-4}$ observed here is also very close to the maximum value ranging from $6.7\times 10^{-4}-6.9 \times 10^{-4}$ observed in \cite{VDM11-adv-diff,ESZ09-ADG}. 
\begin{figure}
	\caption{Solution profile, test case 4, velocity field with counterclockwise orientation (left: numerical solution on $60\times60$ cells, right: numerical solution on $480\times480$ cells).}
	\begin{tabular}{cc}
		\includegraphics[width=0.45\linewidth]{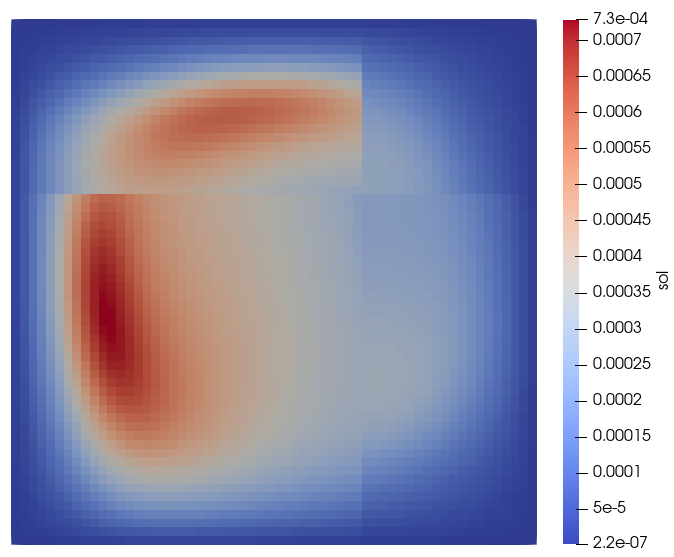} & \includegraphics[width=0.45\linewidth]{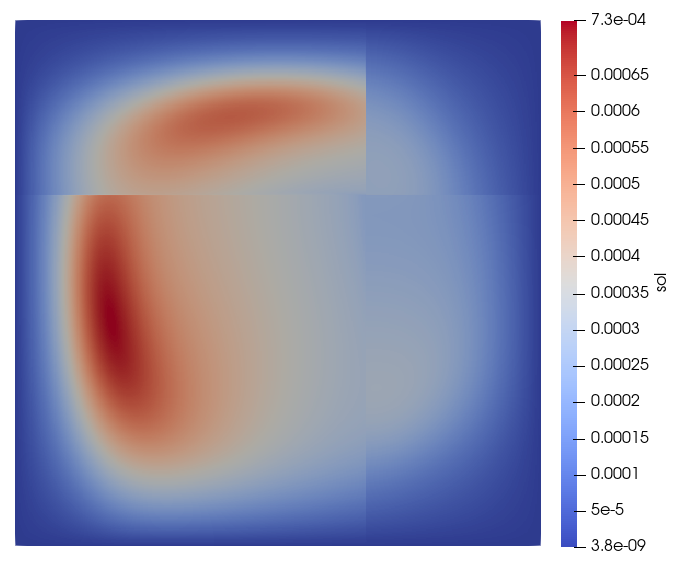} 
	\end{tabular}
	\label{fig.hetero_test_case}
\end{figure}
\begin{figure}
	\caption{Solution profile, test case 4, velocity field with clockwise orientation (left: numerical solution on $60\times60$ cells, right: numerical solution on $480\times480$ cells).}
	\begin{tabular}{cc}
		\includegraphics[width=0.45\linewidth]{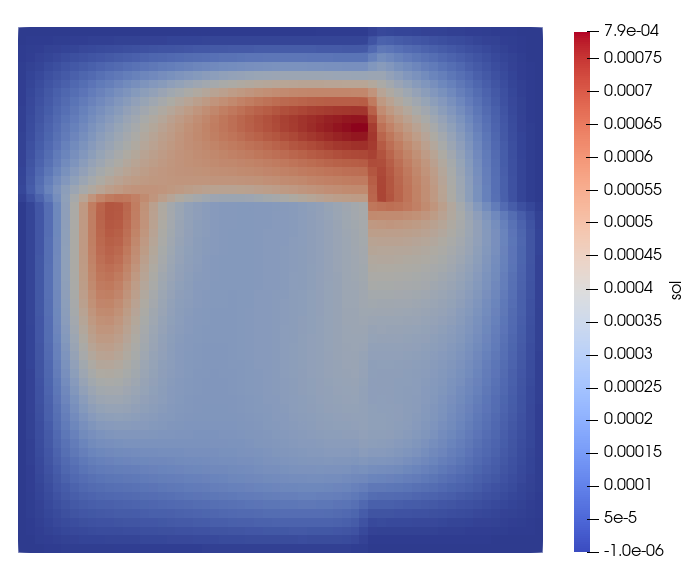} & \includegraphics[width=0.45\linewidth]{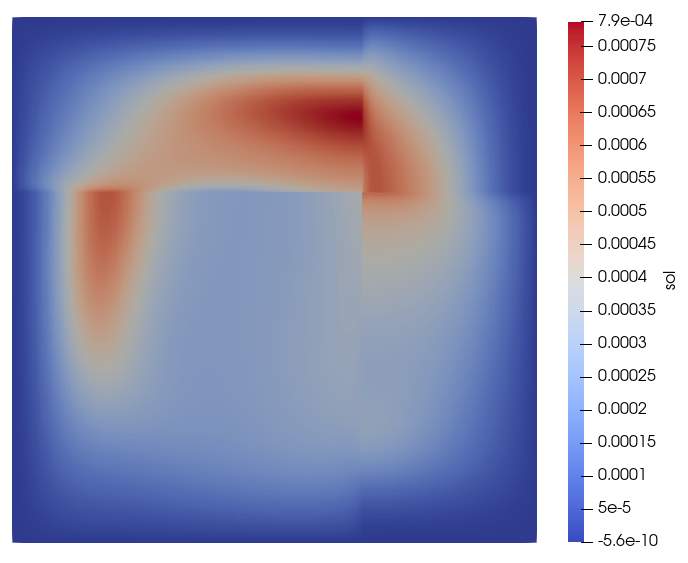} 
	\end{tabular}
	\label{fig.hetero_test_case1}
\end{figure}

Another point of comparison can be made by reversing the flow, that is, by taking $-\V$, whilst retaining all the other parameters used for the test case. In this case, the velocity field is now oriented along the direction of decreasing diffusivity; hence, we expect the solution to be continuous near the interfaces. This behavior can be seen in Figure \ref{fig.hetero_test_case1} and Figure \ref{fig.hetero_test_case_3D}, right. As with the initial test, we observe that the numerical solution at the coarse mesh with $60\times 60$ cells is already very close to what we obtain on the very fine mesh of $480\times 480$ cells. Also, the maximum value of $7.9\times 10^{-4}$ observed here is also very close to the maximum value of $7.3\times 10^{-4}$ observed in \cite{ESZ09-ADG}. We note however that negative values are present in the numerical solutions, but the effect is not very significant, since they are only very small in magnitude: $10^{-6}$ on the coarse mesh, and $10^{-10}$ on the fine mesh.

\begin{figure}
	\caption{Solution profile on a coarse grid with $60 \times 60$ cells, test case 4, (left: velocity field with counterclockwise orientation, right: velocity field with clockwise orientation).}
	\begin{tabular}{cc}
		\includegraphics[width=0.45\linewidth]{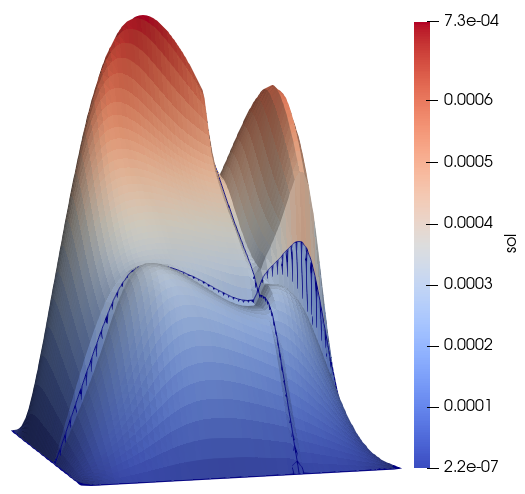} & \includegraphics[width=0.45\linewidth]{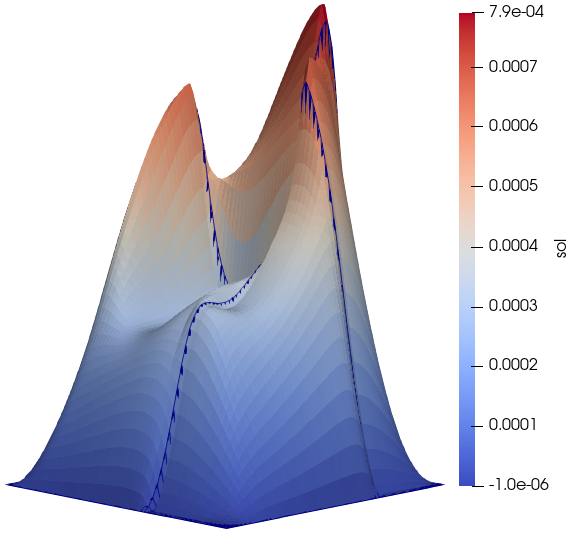} 
	\end{tabular}
	\label{fig.hetero_test_case_3D}
\end{figure}
\subsection{Monotonicity test} \label{sec:monTest}
Finally, we present a test from \cite{L10-monotoneFV}, which we refer to as test case 5. This is an extreme test in the sense that many linear methods result in a significant violation of the discrete maximum principle, and consequently produce numerical solutions that exhibit non-physical oscillations. We consider $\O=(0,1)^2 \setminus [4/9,5/9]^2$; that is, the domain is a square with a hollow square interior, resulting in a boundary consisting of two disjoint parts $\Gamma_1$ and $\Gamma_2$ (see Figure \ref{fig.mon.data}).  Here, the source term is set to be $s=0$, and Dirichlet boundary conditions are imposed, with $c=0$ on $\Gamma_1$ and $c=2$ on $\Gamma_2$. 
\begin{figure}
	\caption{Domain and data for test case 5.}
	\begin{center}
		\includegraphics[width=0.35\linewidth]{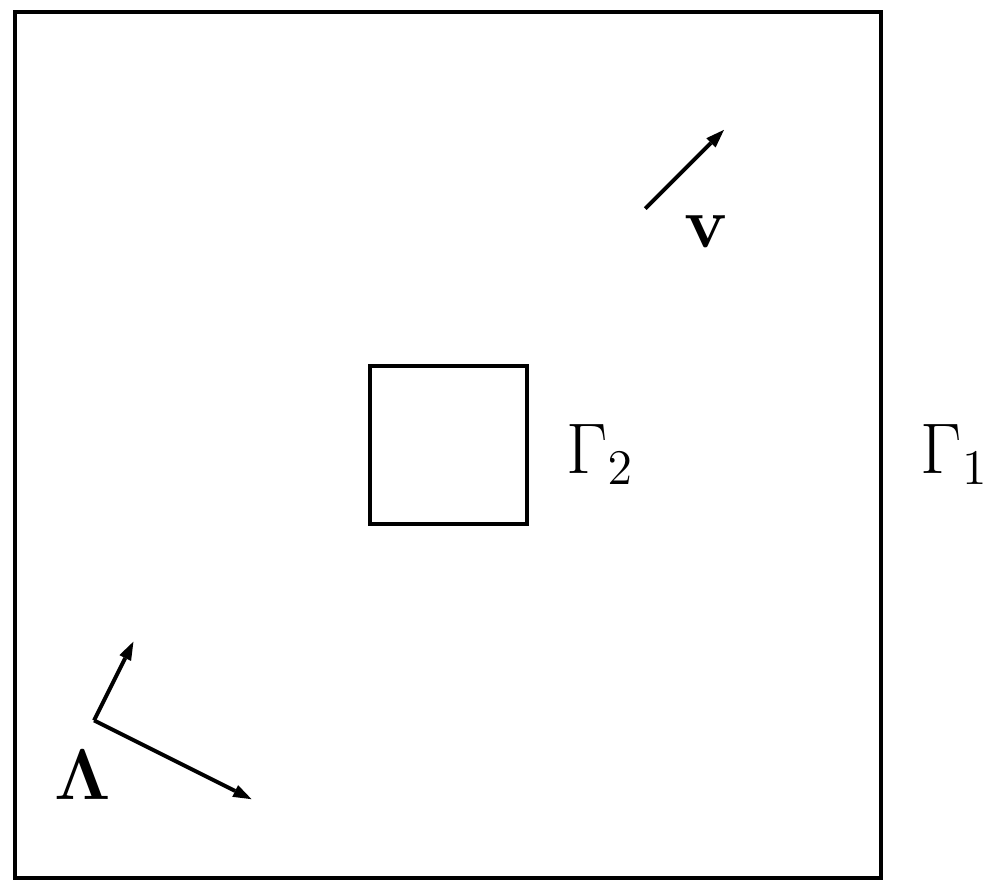} 
	\end{center}
	\label{fig.mon.data}
\end{figure}

\noindent The velocity field is given by $\V=(700,700)^T$, and the diffusion tensor reads
\[
\Lam = R(-\pi/6)\begin{bmatrix}
1000 & 0 \\
0 & 1
\end{bmatrix} R(\pi/6), \qquad R(\theta) = \begin{bmatrix}
\cos \theta & \sin \theta \\
-\sin\theta & \cos\theta
\end{bmatrix}.
\]

\begin{figure}
	\caption{Solution profile, test case 5 (left: numerical solution on $45\times45$ cells, right: numerical solution on $360\times360$ cells).}
	\begin{tabular}{cc}
		\includegraphics[width=0.45\linewidth]{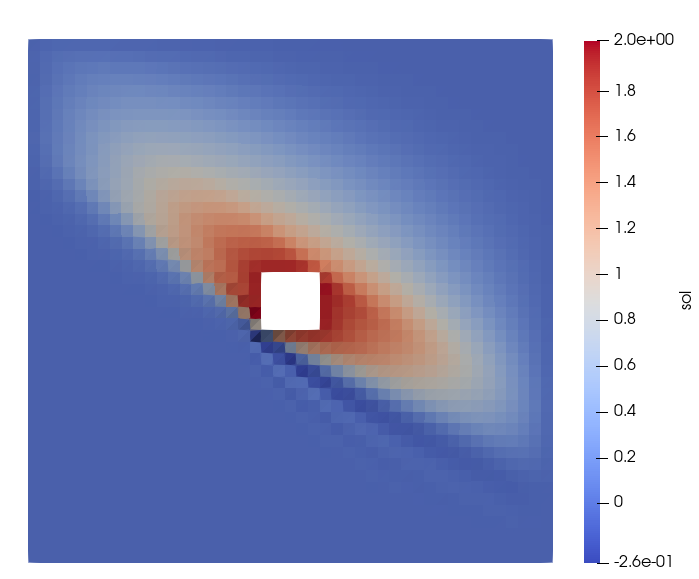} & \includegraphics[width=0.45\linewidth]{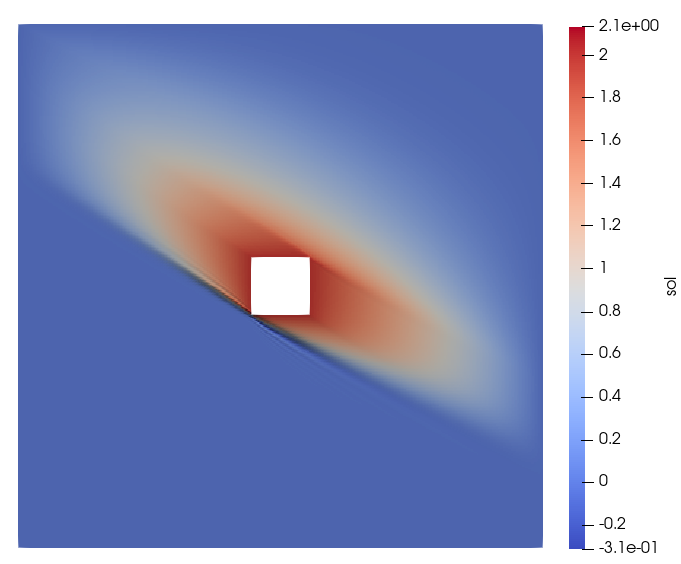} 
	\end{tabular}
	\label{fig.mon_test_case}
\end{figure}
For this test case, the exact solution is expected to be bounded by 0 and 2. Here, we see in Figure \ref{fig.mon_test_case} that the maximum value is close to what is expected, with slight overshoots occurring at the top right corner of the boundary $\Gamma_2$. On the other hand, we see that the complete flux scheme does not guarantee the non-negativity of the numerical solution, with undershoots occurring near the lower left corner of $\Gamma_2$. Non-physical oscillations are also detected in these regions. Although we observe these non-physical qualities, it still offers an improvement over some classical methods, e.g. the lowest-order Raviart–Thomas mixed finite element method, for which the numerical solution is negative over almost half of the computational domain \cite{LSY09-FV-diff}. In particular, the overshoots and undershoots for the generalised complete flux scheme are only observed locally near the top right and lower left regions of $\Gamma_2$ and only cover $16\%$ and $6\%$ of the domain for the coarse and fine meshes, respectively. We note however that the negative values are slightly worse on the finer mesh. These negative non-physical oscillations propagate from the lower left corner of $\Gamma_2$. By refining the mesh, the area of each cell is smaller. Hence, the very strong diffusion along the direction of the eigenvector $(\cos\frac{\pi}{6},-\sin\frac{\pi}{6})^T$ has less area to spread the value of $c=2$ properly, leading to larger negative values.

\section{Summary and outlook} \label{sec:Summary}
In this paper, we were able to present a generalised complete flux scheme for anisotropic advection-diffusion problems. The main novelty comes from splitting the diffusive and advective components of the flux, which allows for a combination of different numerical discretisations with the CF method. This resulted in a scheme which can be applied to problems with anisotropic diffusion tensors. Related to this, we introduced a grid-based local P\'eclet number \eqref{eq:Peclet2_2D}, which allows us to capture properly the local strength of advection over diffusion. Such a generalisation of the P\'eclet number, which allows us to deal with anisotropic diffusion not aligned with the mesh is relatively new, see e.g. \cite[Section 3.2.1.3]{HHOBook20}. The discussion provided in Section \ref{sec:Pecletno} of this work gives a detailed presentation as to why \eqref{eq:Peclet2_2D} is a good choice for defining the local P\'eclet number. Another important contribution is an alternative presentation of the CF method in two dimensions, which can straightforwardly be extended into three dimensions. Moreover, it also provides a framework for computing inhomogeneous fluxes on irregular meshes. The only caveat here is that on non-rectangular meshes, the cross-fluxes no longer lie on cell faces. Hence, we need to perform interpolations and a change of coordinates (similar to those described in \cite{ABK17-FVCF-2D}) in order to obtain approximations for the cross-fluxes and source fluxes in lines 8 and 9 of Algorithm \ref{algo:inhomogeneous_flux}. This is not straightforward, and will be the purpose of future research. The numerical tests presented in Section \ref{sec:simpleTest} illustrate the second-order accuracy of the generalised CF method, even for strong anisotropy and advection dominated problems. Moreover, the tests performed in Section \ref{sec:hetero_test} showcase the ability of the generalised CF method to handle strongly anisotropic heterogeneous diffusion tensors. We however notice in Section \ref{sec:monTest} the presence of non-physical oscillations in the numerical solution. This was not unexpected as it has already been remarked in \cite{L10-monotoneFV} that these non-physical oscillations are present in many linear methods, and future work will involve working on how to mitigate these non-physical oscillations. Another interesting aspect for future work will involve the study of non-stationary advection-diffusion problems.
\section{Acknowledgements}
The authors would want to thank Prof. Sorin Pop for the discussions and comments which helped improve the presentation of the generalised local P\'eclet number. We would also like to thank the referees for their detailed feedback, which helped improve the overall presentation of the manuscript.

	\bibliographystyle{abbrv}
	\bibliography{HMM_CF}
\end{document}